\theoremstyle{plain} %default
\newtheorem{thm}{Theorem}[section]
\newtheorem{chunk}[thm]{\hspace*{-1.065ex}\bf}
\newtheorem{prop}[thm]{Proposition}
\newtheorem{cor}[thm]{Corollary}
\newtheorem{lem}[thm]{Lemma}
\theoremstyle{definition}
\newtheorem{dfn}[thm]{Definition}
\newtheorem{conj}[thm]{Conjecture}
\newtheorem{rmk}[thm]{Remark}
\numberwithin{equation}{section}
\newcommand{\lra}{\longrightarrow}
\newcommand{\fm}{\mathfrak{m}}
\newcommand{\fn}{\mathfrak{n}}
\newcommand{\fp}{\mathfrak{p}}
\newcommand{\up}[1]{{{}^{#1}\!}}
\DeclareMathOperator{\Ann}{Ann}
\DeclareMathOperator{\depth}{depth}
\DeclareMathOperator{\Ext}{Ext}
\DeclareMathOperator{\hh}{H}
\DeclareMathOperator{\Hom}{Hom}
\DeclareMathOperator{\id}{id}
\DeclareMathOperator{\im}{im}
\DeclareMathOperator{\pd}{pd}
\DeclareMathOperator{\Supp}{Supp}
\DeclareMathOperator{\Spec}{Spec}
\DeclareMathOperator{\Tor}{Tor}
\DeclareMathOperator{\Jac}{Jac}
\DeclareMathOperator{\grade}{grade}
\DeclareMathOperator{\PP}{\mathcal{P}}
\newcommand{\tp}[2]{{\boldsymbol\top}_{\hskip-2pt #1}{#2}}
 \renewcommand{\le}{\leqslant} 
\renewcommand{\geq}{\geqslant} \renewcommand{\leq}{\leqslant}
\def\urltilda{\kern -.15em\lower .7ex\hbox{\~{}}\kern .04em}
\def\urldot{\kern -.10em.\kern -.10em}\def\urlhttp{http\kern -.10em\lower -.1ex
\hbox{:}\kern -.12em\lower 0ex\hbox{/}\kern -.18em\lower 0ex\hbox{/}}
\begin{document}

\author[O. Celikbas]{Olgur Celikbas}
\address{Olgur Celikbas\\
School of Mathematical and Data Sciences\\
West Virginia University\\
Morgantown, WV 26506-6310, U.S.A}
\email{olgur.celikbas@math.wvu.edu}

\author[Y. Yongwei]{Yongwei Yao}
\address{Yongwei Yao\\
Department of Mathematics and Statistics, 
Georgia State University\\
Atlanta, GA 30303, U.S.A.}
\email{yyao@gsu.edu}

\subjclass[2020]{Primary 13D07; Secondary 13A35, 13D05, 13C12}
\keywords{Ext and Tor, Frobenius endomorphism, rigid and test modules, tensor products of modules, torsion} 

\title[On the vanishing of (co)homology]{On the vanishing of (co)homology for modules \\admitting certain filtrations}

\begin{abstract} We study the vanishing of (co)homology along ring homomorphisms for modules that admit certain filtrations, and  generalize a theorem of O.~Celikbas-Takahashi. Our work produces new classes of rigid and test modules, in particular over local rings of prime characteristic. It also gives applications in the study of torsion in tensor products of modules, for example, concerning the Huneke-Wiegand conjecture. 
\end{abstract}

\date{April 15, 2023}

\maketitle{}

\setcounter{tocdepth}{1}

\section{Introduction}

Throughout all rings are assumed to be commutative. By a local ring we mean a Noetherian ring with a unique maximal ideal. The research in this paper has been initiated by the following result: 

\begin{thm}[{O.~Celikbas-Takahashi \cite{CT21}}] \label{CT}
Let $R$ be a local ring and let $M$ be a finitely generated $R$-module such that $\depth_R(M)\geq 1$. If $\Tor_n^R(\fm^tM,N) = 0$ for some finitely generated $R$-module $N$, where $t\geq 0$ and $n\geq 0$, then
$\Tor_n^R(M, N) = 0$. 
\end{thm}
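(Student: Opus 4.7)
I would proceed by induction on $t$. The base case $t = 0$ is vacuous since $\mathfrak{m}^0 M = M$. For $t \geq 2$, I would apply the theorem inductively (valid for $t-1$) to $\mathfrak{m} M$ in place of $M$: this is legitimate because any $M$-regular element of $\mathfrak{m}$ restricts to a nonzerodivisor on the submodule $\mathfrak{m} M$, and if $\mathfrak{m} M = 0$ then $M$ has finite length, which combined with $\depth_R M \geq 1$ forces $M = 0$ and trivializes the conclusion. Since $\mathfrak{m}^{t-1}(\mathfrak{m} M) = \mathfrak{m}^t M$, this collapses the problem to the case $t = 1$, which must be handled directly.

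For $t = 1$, the key tool is the short exact sequence
$$0 \to \mathfrak{m} M \to M \to k^s \to 0, \qquad s = \mu_R(M),$$
where $k = R/\mathfrak{m}$. The long exact sequence of $\Tor(-, N)$, combined with the hypothesis $\Tor_n^R(\mathfrak{m} M, N) = 0$, produces an injection $\Tor_n^R(M, N) \hookrightarrow \Tor_n^R(k^s, N)$, so $\Tor_n^R(M, N)$ is annihilated by $\mathfrak{m}$. When $n = 0$, Nakayama's lemma together with the identity $\mu_R(M \otimes_R N) = \mu_R(M)\,\mu_R(N)$ immediately gives $M \otimes_R N = 0$.

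For $n \geq 1$ I would choose an $M$-regular $x \in \mathfrak{m}$ and exploit two further short exact sequences: $0 \to xM \to \mathfrak{m} M \to \mathfrak{m} M/xM \to 0$ (with $xM \cong M$) and $0 \to \mathfrak{m} M/xM \to M/xM \to k^s \to 0$. The first, together with the vanishing of $\Tor_n^R(\mathfrak{m} M, N)$, realizes $\Tor_n^R(M, N)$ as a quotient of $\Tor_{n+1}^R(\mathfrak{m} M/xM, N)$; the factorization $M \xrightarrow{x} xM \hookrightarrow \mathfrak{m} M \hookrightarrow M$ additionally forces multiplication by $x$ on $\Tor_n^R(M, N)$ to factor through zero. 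A careful diagram chase among these long exact sequences, combined with Nakayama applied to the quotient presentation above, should then force $\Tor_n^R(M, N) = 0$.

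The main obstacle is precisely this final step. A single long exact sequence yields only the $\mathfrak{m}$-torsion conclusion, and upgrading it to genuine vanishing requires leveraging the depth hypothesis beyond merely producing one regular element. The delicate point is identifying the specific composite among the three short exact sequences that collapses the quotient presentation of $\Tor_n^R(M, N)$ to zero; this is where I expect the real subtlety of the Celikbas--Takahashi argument to reside.
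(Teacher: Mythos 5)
Your reduction to $t=1$ by induction is fine, and your handling of $n=0$ via Nakayama is correct, but the core of the theorem --- the case $t=1$, $n\ge 1$ --- is left open, as you yourself acknowledge. The gap is real and cannot be closed along the lines you sketch: all three of your short exact sequences, chased through their long exact sequences, only ever tell you that $\Tor_n^R(M,N)$ is annihilated by $\fm$ (your factorization of multiplication by $x$ through $\Tor_n^R(\fm M,N)=0$ gives nothing beyond this), and that it is a quotient of $\Tor_{n+1}^R(\fm M/xM,N)$. Neither fact forces vanishing: $\Tor_n^R(k,N)$ is always $\fm$-torsion and is nonzero whenever $n\le\pd_R(N)$, and Nakayama applied to a quotient presentation $P\twoheadrightarrow T$ requires $T=\fm T$, for which your diagram provides no source. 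So the ``careful diagram chase'' you defer to does not exist at the level of long exact sequences of $\Tor$.

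The idea actually needed (and the one the paper uses, in Proposition~\ref{Mainthm1}, following Levin--Vasconcelos) is to work with a \emph{minimal} free resolution $F_\bullet$ of $N$ rather than with short exact sequences in the first argument. Writing $g_{n+1}=1_M\otimes f_{n+1}$ and $h_{n+1}=1_{\fm M}\otimes f_{n+1}$, minimality gives $\im(g_{n+1})\le\fm(M\otimes_R F_n)=\fm M\otimes_R F_n$, i.e.\ the boundaries of $M\otimes_R F_\bullet$ already live in the subcomplex $\fm M\otimes_R F_\bullet$; combined with $H_n(\fm M\otimes_R F_\bullet)=0$ and the observation that $h_{n+1}(\fm M\otimes_R F_{n+1})=\fm\,\im(g_{n+1})$, one gets the chain $\im(g_{n+1})\le\ker(g_n)\cap\fm(M\otimes_R F_n)\le\ker(h_n)=\im(h_{n+1})\le\fm\,\im(g_{n+1})$, whence $\im(g_{n+1})=0$ by Nakayama and, moreover, $\fm\ker(g_n)=0$. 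Only now does the depth hypothesis enter, and in a different way than you use it: since $\depth_R(M)\ge 1$, the module $M\otimes_R F_n\cong M^{\oplus\rank F_n}$ has no nonzero submodule killed by $\fm$, so $\ker(g_n)=0$ and $\Tor_n^R(M,N)=0$. You should replace your long-exact-sequence chase with this minimal-complex argument; the inclusion of complexes $\fm M\otimes_R F_\bullet\subseteq M\otimes_R F_\bullet$ is the structure that your three short exact sequences cannot see.
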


The main purpose of this paper is to extend Theorem~\ref{CT} by considering its conclusion along ring homomorphisms. Our main result, which extends Theorem~\ref{CT}, is concerned with the vanishing of (co)homology for modules (not necessarily finitely generated) that admit a certain filtration. More precisely, we prove:

\begin{thm} \label{mainthm} Let $f: R \to S$ be a ring homomorphism and let $M\neq 0$ be a finitely generated $S$-module. Assume the following hold:
\begin{enumerate}[\; \rm(i)]
\item $(R, \fm)$ is local.
\item $S$ is Noetherian and $\fm S \subseteq \Jac(S)$.
\item There is an element of $\fm$ which is a non zero-divisor on $M$. 
\item There is a filtration of $M$ of the form $M_t \le M_{t-1} \le \cdots \le M_1 \le M_0 = M$,
where each $M_i$ is an $S$-module, and $\fm M_{i-1} \le M_i \le \Jac(S)M_{i-1}$ for all $i = 1, \dotsc, t$. 
\end{enumerate} 
If $\Tor_n^R(M_i, N)= 0$, where $N$ is a finitely generated $R$-module, $n\geq 0$, and $1\leq i \leq t$, then it follows
\[
\Tor_n^R(M_{i-1}, N) = \Tor_n^R(M_{i-2}, N) =\cdots = \Tor_n^R(M, N)=0. 
\] 
\end{thm}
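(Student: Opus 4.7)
The plan is to iterate the single-step implication: given the hypotheses, if $\Tor_n^R(M_j,N)=0$ then $\Tor_n^R(M_{j-1},N)=0$, for each $j\in\{1,\dotsc,i\}$. Descending induction on $j$ starting from $i$ then delivers the full chain of vanishing. Fix such a $j$, abbreviate $T:=\Tor_n^R(M_{j-1},N)$, and note that $T$ is a finitely generated $S$-module, since $S$ is Noetherian, $N$ is finitely generated over $R$, and $M_{j-1}\le M$ is finitely generated over $S$. I will establish two tight constraints on $T$ and then combine them.

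\emph{First constraint: $\fm T=0$.} From hypothesis (iv), $\fm M_{j-1}\le M_j$, so $M_{j-1}/M_j$ is a vector space over $k:=R/\fm$, and consequently every $\Tor_p^R(M_{j-1}/M_j,N)$ is annihilated by $\fm$. The long exact Tor sequence of $0\to M_j\to M_{j-1}\to M_{j-1}/M_j\to 0$, combined with $\Tor_n^R(M_j,N)=0$, produces an injection $T\hookrightarrow \Tor_n^R(M_{j-1}/M_j,N)$; hence $\fm T=0$, and in particular $xT=0$.

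\emph{Second constraint: exploiting $x$-regularity.} Since $x\in\fm$ is $M$-regular, it is $M_{j-1}$-regular. Moreover $xM_{j-1}\le \fm M_{j-1}\le M_j$, so multiplication by $x$ on $M_{j-1}$ lands in $M_j$, producing the short exact sequence $0\to M_{j-1}\xrightarrow{x} M_j\to M_j/xM_{j-1}\to 0$. Its long exact Tor sequence, together with $\Tor_n^R(M_j,N)=0$, yields a surjection $\Tor_{n+1}^R(M_j/xM_{j-1},N)\twoheadrightarrow T$. In parallel, the Tor long exact sequence of $0\to M_{j-1}\xrightarrow{x} M_{j-1}\to M_{j-1}/xM_{j-1}\to 0$, combined with $xT=0$, gives an injection $T\hookrightarrow \Tor_n^R(M_{j-1}/xM_{j-1},N)$.

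\emph{Main obstacle.} What remains is to conclude $T=0$ from these bounds. I expect to combine them through the auxiliary short exact sequence $0\to M_j/xM_{j-1}\to M_{j-1}/xM_{j-1}\to M_{j-1}/M_j\to 0$ and the naturality of the long exact sequences under the chain $xM_{j-1}\subseteq M_j\subseteq M_{j-1}$, using the factorization of multiplication by $x$ through $M_j$ as the structural bridge between the two Tor sequences. A concluding Nakayama-style argument applied to the finitely generated $S$-module $T$, invoking $\fm S\subseteq \Jac(S)$, is anticipated to force the vanishing. Making this comparison rigorous is the subtlest step of the argument.
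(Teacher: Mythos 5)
Your two preliminary constraints are correct, but the argument stops exactly where the theorem actually lives, and the route you sketch for the decisive step does not work. First, nothing you derive ever uses the upper bound $M_j \le \Jac(S)M_{j-1}$ from hypothesis (iv); that inclusion is precisely what makes the conclusion true (without it one could take $M_j=M_{j-1}$ only trivially, but more to the point it is the input to the Nakayama step in the actual proof), so an argument that never engages it cannot close. Second, the proposed ``Nakayama-style argument applied to the finitely generated $S$-module $T$'' is a dead end: you have already shown $\fm T=0$, hence $\fm S\cdot T=0$, and from $\fm S\subseteq\Jac(S)$ one only gets $\Jac(S)T\supseteq 0$; there is no mechanism in your setup producing the reverse inclusion $T\subseteq\Jac(S)T$ that Nakayama would require. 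Your constraints are in fact consistent with $T\neq0$: the first one only embeds $T$ into $\Tor_n^R(M_{j-1}/M_j,N)$, which is a direct sum of copies of $\Tor_n^R(k,N)$ and is generally nonzero.

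The missing idea is to work at the chain level of a \emph{minimal} free resolution $F_\bullet$ of $N$ rather than with long exact sequences of Tor; passing to homology discards the minimality, which is the essential input. Set $g_{n+1}=1_{M_{j-1}}\otimes f_{n+1}$ and $h_{n+1}=1_{M_j}\otimes f_{n+1}$, and view $M_j\otimes_RF_\bullet$ as a subcomplex of $M_{j-1}\otimes_RF_\bullet$. Minimality gives $\im(g_{n+1})\le\fm(M_{j-1}\otimes_RF_n)\le M_j\otimes_RF_n$, while $M_j\le\Jac(S)M_{j-1}$ gives $\im(h_{n+1})\le\Jac(S)\im(g_{n+1})$; combining these with $\ker(h_n)=\im(h_{n+1})$ yields the chain $\im(g_{n+1})\le\ker(g_n)\cap\fm(M_{j-1}\otimes_RF_n)\le\ker(h_n)=\im(h_{n+1})\le\Jac(S)\im(g_{n+1})$, so $\im(g_{n+1})=\Jac(S)\im(g_{n+1})$ and the boundaries vanish by Nakayama applied to the finitely generated $S$-module $\im(g_{n+1})$. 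The same chain then shows $\ker(g_n)\cap\fm(M_{j-1}\otimes_RF_n)=0$, hence $\fm\ker(g_n)=0$, and the $M$-regular (hence $M_{j-1}$-regular) element of $\fm$ forces $\ker(g_n)=0$. This two-step structure --- boundaries killed by Nakayama over $S$, cycles killed by the regular element --- is what your homology-level constraints cannot reproduce.
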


Theorem~\ref{mainthm} is subsumed by Theorem~\ref{Mainthm2}, which is proved in section 2; see Theorem~\ref{Mainthm3} for an $\Ext$ verison of the theorem. Let us note here that it is not difficult to reprove Theorem~\ref{CT} by using Theorem~\ref{mainthm}: if $R=S$ and $f$ is the identity map, then our assumption on the filtration of $M$ forces $M_i=\fm^iM$ for all $i\geq 0$, and hence Theorem~\ref{CT} follows.

If $R$ is a ring with prime characteristic $p$, $M$ is an $R$-module, and $F$ is the Frobenius map, then each iteration $F^e$ defines a new $R$-module structure on $M$, denoted by $\up{e}{M}$, where $r \cdot x=r^{p^e}x$ for $r\in R$ and $x\in M$. 

Assume $(R, \fm)$ is local ring of prime characteristic $p$. Then hypothesis (iv) in Theorem~\ref{mainthm} implies that  $\fm^{[p^e]} M_{i-1} \le M_i \le \fm M_{i-1}$ for all $i = 1, \dotsc, t$. One way to obtain such a filtration is to assume there are ideals $I_1, \ldots, I_t$ of $R$ such that $\fm^{[p^e]} \subseteq I_j \subseteq \fm$ for all $j=1, \ldots, t$, and set $M_i=(I_1\cdots I_i)M$ for all $i=1, \ldots, t$, which would imply $M_i=I_iM_{i-1}$. Hence we can apply Theorem~\ref{mainthm} for the special case where $f=F^e$ and conclude:

\begin{cor} \label{cor1intro} Let $(R, \fm)$ be a local ring of prime charactistic $p$, and let $M$ and $N$ be nonzero finitely generated $R$-modules such that $\depth_R(M)\geq 1$. Let $I_1, \ldots, I_t$ be ideals of $R$ such that $\fm^{[p^e]} \subseteq I_j \subseteq \fm$ for all $j=1, \ldots, t$, where $e\geq 0$ and $t\geq 1$.

If $\Tor_n(\up{e}{\big((I_1\cdots I_i)M\big)}, N)=0$ for some $n\geq 1$ and for some $i$, where $1\leq i\leq t$, then it follows: 
\[
\Tor_n^R(\up{e}{X_i}, N) = \Tor_n^R(\up{e}{\big((I_1\cdots I_{i-1})M\big)}, N) =\cdots = \Tor_n^R(\up{e}{M}, N)=0.
\] 
\end{cor}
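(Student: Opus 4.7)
The plan is to obtain Corollary~\ref{cor1intro} as a direct application of Theorem~\ref{mainthm}, with $f := F^e : R \to R =: S$ and with the filtration $M_0 := M$, $M_i := (I_1\cdots I_i)M$ for $1 \le i \le t$, each viewed as an $S$-submodule of $M$ (so in particular $M_i = I_i M_{i-1}$). The task then reduces to checking the four hypotheses of Theorem~\ref{mainthm} and unwinding its conclusion in this special setting.

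The principal step is the verification of (iv). From $\fm^{[p^e]} \subseteq I_i \subseteq \fm$ one immediately reads
\[
\fm^{[p^e]} M_{i-1} \subseteq M_i = I_i M_{i-1} \subseteq \fm M_{i-1},
\]
interpreted in the original $R$-module structure on $M$. Since $M_{i-1}$ becomes the $R$-module $\up{e}{M_{i-1}}$ through $f$, on which $r \in R$ acts as multiplication by $r^{p^e}$, the ideal $\fm$ acting via $f$ yields the submodule $\fm^{[p^e]} M_{i-1}$; whereas $\Jac(S) = \fm$, acting via the untwisted $S$-structure, yields $\fm M_{i-1}$. Thus the displayed containments are exactly the condition $\fm M_{i-1} \le M_i \le \Jac(S) M_{i-1}$ demanded by (iv). I expect this translation between the two $R$-structures on each $M_{i-1}$ to be the only (mild) subtlety in the argument.

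The remaining hypotheses are routine. Condition (i) is given; (ii) is immediate from $\fm S = \fm^{[p^e]} \subseteq \fm = \Jac(S)$; and (iii) follows because $\depth_R(M) \ge 1$ furnishes an $M$-regular element $r \in \fm$ whose power $r^{p^e}$ is again $M$-regular, which is precisely the statement that $r$ is a non zero-divisor on $M$ in the $f$-twisted action. Since restricting scalars along $f = F^e$ identifies each $\Tor_n^R(M_j, N)$ in the sense of Theorem~\ref{mainthm} with $\Tor_n^R(\up{e}{((I_1\cdots I_j)M)}, N)$, the conclusion of Theorem~\ref{mainthm}---namely the simultaneous vanishing of $\Tor_n^R(M_j, N)$ for $0 \le j \le i$ once it vanishes at level $i$---becomes exactly the asserted chain of vanishings in the corollary.
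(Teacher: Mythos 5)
Your proposal is correct and follows exactly the paper's own route: the authors likewise obtain the corollary by specializing Theorem~\ref{mainthm} to $f=F^e$, $S=R$, and $M_i=(I_1\cdots I_i)M$, observing that the filtration condition $\fm M_{i-1}\le M_i\le \Jac(S)M_{i-1}$ translates to $\fm^{[p^e]}M_{i-1}\le M_i\le \fm M_{i-1}$ under the Frobenius twist. Your verification of hypotheses (i)--(iii) (including the point that an $M$-regular element $r\in\fm$ has $r^{p^e}$ again $M$-regular) is exactly the routine checking the paper leaves implicit.
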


If $M$ is a finitely generated $R$-module via $f$, then each filtration module $M_i$, with $1\leq i \leq t$, in Theorem~\ref{mainthm} is a test module over $R$; see Definition~\ref{def} and Corollary~\ref{cor3}. We make use of this fact and obtain new classes of test modules, for example, over rings of prime characteristic $p$. One such result is the following proposition, which is proved in Corollary~\ref{cor2}(ii).

\begin{prop} \label{cor2intro} Let $(R, \fm)$ be a local F-finite complete intersection ring of positive depth and prime characteristic $p$, $e\geq 0$, and let $t\geq 1$. If $\Tor_n^R(\up{e}{(\fm^t)},N)=0$ or $\Tor_n^R\Big(\up{e}{ \Big(\big(\fm^{[p^e]}\big)^t\Big)}, N\Big)=0$ for some finitely generated $R$-module $N$ and $n\geq 1$, then $\pd_R(N)<\infty$. 
\end{prop}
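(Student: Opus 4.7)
The proof proceeds in two steps: a filtration-based reduction to $\up{e}{R}$ using Corollary~\ref{cor1intro}, followed by an appeal to the (known) fact that the Frobenius module is a test module over an F-finite complete intersection of positive depth.

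For the reduction, I would apply Corollary~\ref{cor1intro} with the module $M = R$; the hypothesis that $R$ has positive depth gives exactly $\depth_R(M) = \depth R \geq 1$, as required. In the first case, take $I_1 = I_2 = \cdots = I_t = \fm$, so that $(I_1 \cdots I_j)M = \fm^j$ and the nesting $\fm^{[p^e]} \subseteq I_j \subseteq \fm$ holds trivially. In the second case, take $I_1 = \cdots = I_t = \fm^{[p^e]}$, so that $(I_1 \cdots I_j)M = (\fm^{[p^e]})^j$ and again $\fm^{[p^e]} \subseteq I_j \subseteq \fm$ is immediate. Applying Corollary~\ref{cor1intro} at index $i = t$ in either case, the single hypothesized vanishing propagates along the chain of filtration modules all the way up to $M = R$, yielding $\Tor_n^R(\up{e}{R}, N) = 0$.

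To conclude, I would invoke the test-module property of $\up{e}{R}$ over an F-finite local complete intersection: a single vanishing $\Tor_n^R(\up{e}{R}, N) = 0$ with $n \geq 1$ already forces $\pd_R(N) < \infty$. This is a consequence of the Avramov--Miller computation $\cx_R(\up{e}{R}) = \codim R$, combined with the rigidity theorems of Avramov--Buchweitz and Jorgensen for Tor over complete intersections; equivalently, it is the assertion that $\up{e}{R}$ is a test module in the sense of Definition~\ref{def}, which is almost certainly the content of the part of Corollary~\ref{cor2} preceding (ii).

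I expect the main obstacle to lie entirely in this second step. The filtration reduction is purely formal once Corollary~\ref{cor1intro} is available; the genuine content of the proposition is the non-trivial fact that \emph{single}-$\Tor$ vanishing against $\up{e}{R}$ suffices to detect finite projective dimension over a complete intersection—not merely a stretch of consecutive vanishings, as would follow from softer rigidity alone. Granted that input, the conclusion $\pd_R(N) < \infty$ drops out of the reduction step.
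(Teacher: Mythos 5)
Your overall strategy---descend along the filtration to $\up{e}{R}$ via Corollary~\ref{cor1intro}, then invoke Avramov--Miller---is essentially the paper's, and it is correct for $e\geq 1$. The paper routes the argument through Corollary~\ref{cor2} (applied with $T=R$, so that $M=T\oplus\Omega_RT=R$) and Corollary~\ref{rmk1}(b): there the descent stops one step short, and the conclusion is extracted from the Tor-rigidity of $\up{e}{R}$ (part of \ref{c1}(ii)) together with the exact sequence $0\to M_i\to M_{i-1}\to k^{\oplus r_i}\to 0$ of \ref{olsun}, which converts the vanishing of $\Tor_{n}^R(M_i,N)$ and $\Tor_{n+1}^R(M_{i-1},N)$ into $\Tor_{n+1}^R(k,N)=0$, hence the sharper bound $\pd_R(N)\leq n$. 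Your version instead descends all the way to $\up{e}{R}$ and uses its full rigid-test (hence strongly-rigid) property; both arguments rest on the same Avramov--Miller input, and yours is correct in that range.

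The one genuine defect is the case $e=0$, which the statement permits. Then $\up{0}{R}=R$ is free, so it is not a test or strongly-rigid module unless $R$ is already regular, and your second step collapses: $\Tor_n^R(\up{0}{R},N)=\Tor_n^R(R,N)=0$ carries no information. The repair is exactly the mechanism the paper uses: stop the descent at $M_1=\fm$ and apply $0\to\fm\to R\to k\to 0$, which for $n\geq 1$ gives $\Tor_{n+1}^R(k,N)\cong\Tor_n^R(\fm,N)=0$ and hence $\pd_R(N)\leq n$ (equivalently, $M_0=R$ is trivially Tor-rigid, so Corollary~\ref{rmk1}(b) still applies with $i=1$). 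With that patch, and with the observation that your appeal to ``testness'' really means the strongly-rigid property obtained by combining the test and Tor-rigid halves of \ref{c1}(ii), your argument is complete.
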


%%%%%%%%%%%%%%%%%%%%%%%%%%

\section{A proof of Theorem~\ref{mainthm}}

The following proposition is key to proving Theorem~\ref{mainthm}.

\begin{prop} \label{Mainthm1} Let $f: R \to S$ be a ring homomorphism such that $(R, \fm)$ is local and $S$ is Noetherian. Let $M\neq 0$ be a 
finitely generated $S$-module and assume the following conditions hold:
\begin{enumerate}[\; \rm(i)]
\item $\fm S \subseteq \Jac(S)$.
\item There is a filtration of $M$ of the form $M_t \le M_{t-1} \le \dotsb \le M_1 \le M_0 = M$,
where each $M_i$ is an $S$-module and $\fm M_{i-1} \le M_i \le \Jac(S)M_{i-1}$ for all $i = 1, \dotsc, t$. 
\end{enumerate} 
Let $F_{\bullet}$ be a minimal complex of finitely generated free $R$-modules:
\[
F_{\bullet}: \qquad
\dotsb \lra F_{n+1} \overset{f_{n+1}}{\lra} F_n \overset{f_n}{\lra} F_{n-1} 
\lra \dotsb 
\]
\begin{enumerate}[\; \rm(1)]
\item Assume $H_n(M_{i}\otimes_R F_{\bullet}) =0$ for some integer $n$ and some $i$ with $1\leq i \leq t$. Then $1_{M_{i-1}} \otimes f_{n+1}=0$. Moreover, if $\grade_R(\fm , M)\geq 1$, then $H_n(M_{i-1}\otimes_R F_{\bullet}) = 0$.
\item If $H_n(M_{i}\otimes_R F_{\bullet})=0$ for some integer $n$ and $M_{i-1}$ is faithful as an $R$-module for some $i$ with $1\leq i \leq t$, then $f_{n+1}$ is the zero map.
\item If  $H_n(M_{i}\otimes_R F_{\bullet}) =  H_{n+1}(M_{i}\otimes_R F_{\bullet}) = 0$ for some integers $n$ and $i$ with $1\leq i \leq t$, then $F_{n+1}=0$.
\end{enumerate}
\end{prop}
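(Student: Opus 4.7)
The heart of the argument is part (1), which I would prove by first showing $\im(1_{M_{i-1}}\otimes f_{n+1}) = \im(1_{M_i}\otimes f_{n+1})$ as submodules of $M_{i-1}\otimes_R F_n$, and then applying Nakayama's lemma. Since $F_\bullet$ is minimal, the matrix entries of $f_{n+1}$ lie in $\fm$; combined with $\fm M_{i-1}\leq M_i$, every element of $\im(1_{M_{i-1}} \otimes f_{n+1})$ actually lies in the submodule $M_i\otimes_R F_n$ (viewed inside $M_{i-1}\otimes_R F_n$ via freeness of $F_n$). As $f_n\circ f_{n+1}=0$, this element is a cycle in $M_i\otimes_R F_\bullet$ at position $n$, so $H_n(M_i\otimes_R F_\bullet)=0$ forces it into $\im(1_{M_i}\otimes f_{n+1})$; the reverse inclusion is automatic. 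For Nakayama, the inclusion $M_i\leq \Jac(S)M_{i-1}$ with freeness of $F_{n+1}$ gives $M_i\otimes_R F_{n+1}\subseteq \Jac(S)\cdot(M_{i-1}\otimes_R F_{n+1})$; then, since $1\otimes f_{n+1}$ is $S$-linear (the $S$-action coming from $M_{i-1}$), one gets $\im(1_{M_i}\otimes f_{n+1})\subseteq \Jac(S)\cdot \im(1_{M_{i-1}}\otimes f_{n+1})$. Combining, the image is contained in its own $\Jac(S)$-multiple; as it is a finitely generated $S$-module (because $M_{i-1}$ is finitely generated over the Noetherian ring $S$ and $F_n$ is finitely generated free over $R$), Nakayama's lemma for $\Jac(S)$ yields $1_{M_{i-1}}\otimes f_{n+1}=0$.

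For the moreover in (1): since $1_{M_i}\otimes f_{n+1}$ is the restriction of the now-zero map, it too vanishes, so $H_n(M_i\otimes_R F_\bullet)=0$ collapses to $\ker(1_{M_i}\otimes f_n)=0$. Taking an $M$-regular element $a\in\fm$ and any cycle $x\in M_{i-1}\otimes_R F_n$, multiplication by $a$ carries $x$ into $M_i\otimes_R F_n$ (since $aM_{i-1}\subseteq \fm M_{i-1}\leq M_i$), where it remains a cycle and therefore vanishes; as $a$ is regular on $M_{i-1}\subseteq M$, component-wise regularity on $M_{i-1}\otimes_R F_n \cong M_{i-1}^{\,\mathrm{rank}(F_n)}$ forces $x=0$, yielding $H_n(M_{i-1}\otimes_R F_\bullet)=0$. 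Part (2) is then immediate: $1_{M_{i-1}}\otimes f_{n+1}=0$ translates into every matrix entry of $f_{n+1}$ annihilating the faithful $R$-module $M_{i-1}$, so all such entries vanish and $f_{n+1}=0$.

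For part (3), apply (1) at degrees $n$ and $n+1$ to obtain $1_{M_{i-1}}\otimes f_{n+1}=0 = 1_{M_{i-1}}\otimes f_{n+2}$, so both differentials flanking position $n+1$ in $M_{i-1}\otimes_R F_\bullet$ vanish and $H_{n+1}(M_{i-1}\otimes_R F_\bullet)=M_{i-1}\otimes_R F_{n+1}$. The short exact sequence $0\to M_i\to M_{i-1}\to M_{i-1}/M_i\to 0$ tensored with the free complex $F_\bullet$ induces a long exact sequence on homology; since $\fm$ annihilates $M_{i-1}/M_i$ while the differentials of $F_\bullet$ are $\fm$-valued, $(M_{i-1}/M_i)\otimes_R F_\bullet$ has zero differentials and $H_k((M_{i-1}/M_i)\otimes_R F_\bullet)=(M_{i-1}/M_i)\otimes_R F_k$. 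The hypotheses $H_n(M_i\otimes_R F_\bullet)=H_{n+1}(M_i\otimes_R F_\bullet)=0$ compress the long exact sequence into an isomorphism $H_{n+1}(M_{i-1}\otimes_R F_\bullet) \cong (M_{i-1}/M_i)\otimes_R F_{n+1}$ induced by the natural quotient; comparing with the earlier identification, the surjection $M_{i-1}\otimes_R F_{n+1}\twoheadrightarrow (M_{i-1}/M_i)\otimes_R F_{n+1}$ is actually an isomorphism, so its kernel $M_i\otimes_R F_{n+1}$ vanishes, which (provided $M_i\neq 0$) forces the free module $F_{n+1}$ to be zero. The main obstacle is executing the Nakayama step in part (1) with the correct module structure: the hypothesis $\fm S\subseteq \Jac(S)$ is exactly what bridges minimality (matrix entries in $\fm$, an $R$-datum) with Nakayama's lemma for the finitely generated $S$-submodule $\im(1_{M_{i-1}}\otimes f_{n+1})$ of $M_{i-1}\otimes_R F_n$.
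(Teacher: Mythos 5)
Your proposal is correct. Parts (1) and (2) follow essentially the same route as the paper: identifying $M_i\otimes_R F_\bullet$ with a subcomplex of $M_{i-1}\otimes_R F_\bullet$, using minimality plus $\fm M_{i-1}\le M_i$ to trap $\im(1_{M_{i-1}}\otimes f_{n+1})$ inside the cycles of the subcomplex, and then using $M_i\le\Jac(S)M_{i-1}$ to feed Nakayama over $S$; the paper packages this as a single chain of inclusions through $\ker(g_n)\cap\fm C_n$, but the content is identical. For the ``moreover'' clause you invoke an actual $M$-regular element $a\in\fm$, whereas the paper only uses that $\fm\ker(g_n)=0$ together with $\grade_R(\fm,C_n)\ge1$; your version is fine, but note that producing such an $a\in\fm$ from $\grade_R(\fm,M)\ge 1$ requires a small prime-avoidance step through the contractions of the associated primes of $M$ over $S$ (the paper's annihilator formulation avoids this). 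Part (3) is where you genuinely diverge: the paper argues directly that $F_{n+1}\otimes_R M=\ker(1_M\otimes f_{n+1})=\im(1_M\otimes f_{n+2})=0$, while you run the long exact sequence of $0\to M_i\to M_{i-1}\to M_{i-1}/M_i\to 0$ tensored with $F_\bullet$ to conclude $M_i\otimes_R F_{n+1}=0$. Both work, and your route is arguably the more carefully justified one; your explicit caveat ``provided $M_i\neq 0$'' is a real hypothesis that the statement as printed omits (if $M_i=0$ the vanishing hypotheses are vacuous and the conclusion fails, e.g.\ for $M=k$, $M_1=0$), but it holds in all of the paper's applications, where either the $M_i$ are assumed nonzero or positive grade forces $\fm^iM\neq0$. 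A slightly shorter finish to (3), closer in spirit to your own part (1), is to note that $1_{M_i}\otimes f_{n+1}$ is the restriction of the zero map $1_{M_{i-1}}\otimes f_{n+1}$, so $H_{n+1}(M_i\otimes_R F_\bullet)=0$ gives $M_i\otimes_R F_{n+1}=\ker(1_{M_i}\otimes f_{n+1})=\im(1_{M_i}\otimes f_{n+2})=0$ directly.
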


\begin{proof} It suffices to prove the case when $i = 1$ (note each $M_i$ has positive grade if $M$ has positive grade). Tensoring $F_{\bullet}$ with $M$ over $R$, we obtain the complex 
\[
M \otimes_R F_{\bullet}: \qquad
\dotsb \lra C_{n+1} \overset{g_{n+1}}{\lra} C_n \overset{g_n}{\lra} C_{n-1} 
\lra \dotsb %\lra C_0 \lra 0
\]
where $C_i = M \otimes_RF_i$ and $g_i = 1_{M} \otimes f_i$.
Similarly, tensoring $F_{\bullet}$ with $M_1$ over $R$, we obtain the complex
\[
M_1 \otimes_R F_{\bullet}: \qquad
\dotsb \lra D_{n+1} \overset{h_{n+1}}{\lra} D_n \overset{h_n}{\lra} D_{n-1} 
\lra \dotsb. % \lra D_0 \lra 0.
\]
where $D_i = M_1 \otimes_R F_i$ and $h_i = 1_{M_1} \otimes f_i$.

As $F_i$ is free over $R$, we may assume $D_i \le
C_i$ for all $i$ and $D_{\bullet}$ is a subcomplex of $C_{\bullet}$. Since $F_{\bullet}$ is minimal over $(R, \fm)$ and $\fm M \le M_{1}$,
we see that $\im(g_{n+1}) \le \fm C_n \le D_n$. 

(1) Assume $H_n(M_1 \otimes_R F_{\bullet}) = 0$, that is, $\ker(h_n) = 
\im(h_{n+1})$. It follows that $D_{n+1} \le \Jac(S) C_{n+1}$ because $M_1 \le \Jac(S)M$.
Therefore $\im(h_{n+1})\le \Jac(S)\im(g_{n+1})$. Putting it all together, we see
\[
\im(g_{n+1}) \le \ker(g_n) \cap \fm C_n \le \ker(g_n) \cap D_n = \ker(h_n) =
\im(h_{n+1}) \le \Jac(S)\im(g_{n+1}) \le \im(g_{n+1}), \tag{$*$}
\]
which shows that $\Jac(S)\im(g_{n+1}) = \im(g_{n+1})$. As $C_i$ is a finitely generated $S$-module for each $i$, we conclude that $\im(g_{n+1})
= 0$ by Nakayama's lemma. This proves that the function $1_{M} \otimes f_{n+1}$ is zero. Moreover, we see from $(*)$ that $\ker(g_n) \cap \fm C_n=0$, which implies that $\fm \ker(g_{n}) \le \ker(g_n) \cap \fm C_n=0$.
If $\grade_R(\fm , M)\geq 1$ (so that $\grade_R(\fm, C_n)\geq 1$), we must have 
$\ker(g_{n}) = 0$, and hence $H_n(M \otimes_R F_{\bullet}) =
\ker(g_n)/\im(g_{n+1}) = 0$. 

(2) Assume $H_n(M_{i}\otimes_R F_{\bullet})=0$ for some integer $n$ and $M$ is faithful as an $R$-module. Recall $1_{M} \otimes f_{n+1}=0$ by part (1). This implies that the ideal of $R$ generated by the entries of the matrix representing $f_{n+1}$ is contained in the annihilator of $M$. Therefore $f_{n+1}$ is the zero map.

(3) Assume $H_n(M_{1}\otimes_R F_{\bullet}) =  H_{n+1}(M_{1}\otimes_R F_{\bullet}) = 0$ for some integer $n$. Then it follows from part (1) that $$F_{n+1}\otimes_R M=\ker(1_M\otimes f_{n+1})=\im(1_M \otimes f_{n+2})=0.$$
As $M\neq 0$, this implies that $F_{n+1}=0$.
\end{proof}

\begin{rmk} The general idea used to prove the first two parts of Proposition~\ref{Mainthm1} stems from the work of Levin and Vascencelos \cite[Lemma, page 316]{LV} (see also \cite[the proof of 1.2 and 2.2]{CT21}). Moreover the argument in the proof of part (3) of the proposition is essentially from the proof of \cite[2.5(1)]{DeyTos}.
\end{rmk}

Theorem~\ref{mainthm}, which is advertised in the introduction, is subsumed by the next result.

\begin{thm} \label{Mainthm2} Let $f: R \to S$ be a ring homomorphism such that $(R, \fm)$ is local and $S$ is Noetherian. Let $M\neq 0$ be a
finitely generated $S$-module and assume the following conditions hold:
\begin{enumerate}[\; \rm(i)]
\item $\fm S \subseteq \Jac(S)$.
\item There is a filtration of $M$ of the form $M_t \le M_{t-1} \le \dotsb \le M_1 \le M_0 = M$,
where each $M_i$ is an $S$-module and $\fm M_{i-1} \le M_i \le \Jac(S)M_{i-1}$ for all $i = 1, \dotsc, t$. 
\end{enumerate}  
Then, for each  $n\geq 0$, and $1\leq i \leq t$, and for each finitely generated $R$-module $N$, we have the following:
\begin{enumerate}[\; \rm(a)]
\item If $\grade_R(\fm , M)\geq 1$ and $\Tor_n^R(M_i, N)= 0$, then $\Tor_n^R(M_{i-1}, N) = 0$.
\item If $\Tor_n^R(M_{i}, N)= 0$ and $M_{i-1}$ is a faithful $R$-module, then $\pd_R(N)\leq n$.
\item Assume $\Tor_n^R(M_i, N)= \Tor_{n+1}^R(M_i, N)= 0$. Then $\pd_R(N)\leq n$. Moreover, if $n=0$, then $N=0$.
\end{enumerate}
\end{thm}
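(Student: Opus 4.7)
The plan is to let
\[
F_{\bullet}:\quad \cdots\lra F_{n+1}\xra{f_{n+1}} F_n\xra{f_n} F_{n-1}\lra\cdots
\]
be a minimal free resolution of $N$ over $R$, so that $\Tor_n^R(L,N)\cong H_n(L\otimes_R F_\bullet)$ for every $R$-module $L$. Each item in the conclusion will then follow from the corresponding clause of Proposition~\ref{Mainthm1} applied to the one-step sub-filtration $M_i\le M_{i-1}$: hypothesis (i) of the proposition is inherited verbatim, and the inclusion $\fm M_{i-1}\le M_i\le\Jac(S)M_{i-1}$ supplied by (ii) of the theorem is exactly the proposition's condition (ii) in the case of a single step.

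For part (a), I first observe that the grade hypothesis passes to submodules: any non-zero-divisor on $M$ remains a non-zero-divisor on $M_{i-1}\subseteq M$, so $\grade_R(\fm,M_{i-1})\ge 1$. Then Proposition~\ref{Mainthm1}(1) directly yields $H_n(M_{i-1}\otimes_R F_\bullet)=0$, i.e., $\Tor_n^R(M_{i-1},N)=0$. For part (b), Proposition~\ref{Mainthm1}(2), applied with the faithful module $M_{i-1}$, gives $f_{n+1}=0$; since $F_\bullet$ is exact at $F_n$ this forces $\ker f_n=\im f_{n+1}=0$, so $f_n$ is injective and $0\to F_n\xra{f_n} F_{n-1}\to\cdots\to F_0\to N\to 0$ is a finite free resolution of length $n$, giving $\pd_R(N)\le n$.

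Finally, for part (c), Proposition~\ref{Mainthm1}(3) produces $F_{n+1}=0$; by minimality of $F_\bullet$ the resolution terminates there, so $\pd_R(N)\le n$. For the ``moreover'' clause, when $n=0$ the vanishing $F_1=0$ says $N\cong F_0$ is $R$-free, say $N\cong R^r$; the hypothesis $\Tor_0^R(M_i,N)=M_i^r=0$ combined with $M_i\neq 0$ then forces $r=0$, i.e., $N=0$. The only real bookkeeping challenge is verifying that the one-step sub-filtration $M_i\le M_{i-1}$ meets the hypotheses of Proposition~\ref{Mainthm1}---in particular $M_{i-1}\neq 0$, which is automatic in (a) from the grade condition (otherwise every element of $\fm$ would annihilate a nonzero element of $M$) and in (b) from faithfulness; once this is checked, the theorem reduces immediately to Proposition~\ref{Mainthm1}.
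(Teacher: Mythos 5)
Your proof is correct and takes essentially the same route as the paper, whose entire argument is to apply Proposition~\ref{Mainthm1} with $F_{\bullet}$ a minimal free resolution of $N$; you simply spell out the details (the grade passing to $M_{i-1}$, reading off $\pd_R(N)\leq n$ from $f_{n+1}=0$ or $F_{n+1}=0$) that the paper leaves implicit. The only caveat is that the ``moreover, $N=0$'' clause of (c) requires $M_i\neq 0$, which is not among the stated hypotheses for $i\geq 1$; this is a gap in the theorem's formulation shared by the paper's own one-line proof (later corollaries add the nonvanishing assumption explicitly), not a defect introduced by your argument.
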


\begin{proof} The claims follow from Proposition~\ref{Mainthm1} by letting $F_{\bullet}$ be a minimal free resolution of $N$ over $R$. 
\end{proof}

One can obtain a version of Theorem~\ref{Mainthm2} in terms of the Ext modules. 

\begin{thm} \label{Mainthm3} Assume the same setup as in Theorem~\ref{Mainthm2}. Then, for each  $n\geq 0$, and $1\leq i \leq t$, and for each finitely generated $R$-module $N$, we have the following:
\begin{enumerate}[\; \rm(a)]
\item If $\grade_R(\fm , M)\geq 1$ and $\Ext^n_R(N, M_i)= 0$, then $\Ext^n_R(N, M_{i-1}) = 0$.
\item If $\Ext^n_R(N, M_{i})= 0$ and $M_{i-1}$ is a faithful $R$-module, then $\pd_R(N)\leq n-1$.
\item If $\Ext^n_R(N,M_i)= \Ext^{n+1}_R(N,M_i)= 0$, then $\pd_R(N)\leq n-1$.
\end{enumerate}
\end{thm}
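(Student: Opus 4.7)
The plan is to mimic the proof of Theorem~\ref{Mainthm2}, replacing the tensor functor with $\Hom_R(-,-)$. As in Proposition~\ref{Mainthm1}, it suffices to treat the case $i = 1$. Letting $F_\bullet$ denote a minimal free resolution of $N$ over $R$, I would form the cochain complexes
\[
C^\bullet := \Hom_R(F_\bullet, M) \quad\text{and}\quad D^\bullet := \Hom_R(F_\bullet, M_1),
\]
with differentials $g^j$ and $h^j$. Identifying $C^j \cong M^{r_j}$ and $D^j \cong M_1^{r_j}$, where $r_j := \rank_R F_j$, the filtration hypothesis yields $\fm C^j \subseteq D^j \subseteq \Jac(S) C^j$, while minimality of $F_\bullet$ gives $\im(g^{j-1}) \subseteq \fm C^j$. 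Each $C^j$ is a finitely generated $S$-module via $f$.

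The crux is the $\Ext$-analog of Proposition~\ref{Mainthm1}(1): if $H^n(D^\bullet) = \Ext^n_R(N, M_1) = 0$, then $g^{n-1} = 0$ and $\fm \ker(g^n) = 0$. I would verify the chain
\[
\im(g^{n-1}) \subseteq \fm C^n \cap \ker(g^n) \subseteq D^n \cap \ker(g^n) = \ker(h^n) = \im(h^{n-1}) \subseteq \Jac(S)\im(g^{n-1}) \subseteq \im(g^{n-1}),
\]
forcing all inclusions to be equalities. The penultimate inclusion is the key input, coming from $D^{n-1} \subseteq \Jac(S) C^{n-1}$ (which holds since $F_{n-1}$ is free and $M_1 \subseteq \Jac(S) M$). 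Nakayama's lemma on the finitely generated $S$-module $\im(g^{n-1})$ then gives $\im(g^{n-1}) = 0$, and the chain collapses to force $\fm C^n \cap \ker(g^n) = 0$, hence $\fm \ker(g^n) = 0$.

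Once this is in place, the three parts follow. For (a), the condition $\fm \ker(g^n) = 0$ together with an $M$-regular element in $\fm$ (which is also regular on $C^n \cong M^{r_n}$) forces $\ker(g^n) = 0$, whence $\Ext^n_R(N, M) = 0$. For (b), the vanishing $g^{n-1} = \Hom_R(f_n, M) = 0$ for faithful $M$ forces the matrix of $f_n$ to have entries in $\Ann_R(M) = 0$, so $f_n = 0$; exactness of $F_\bullet$ at $F_n$ then gives $\im(f_{n+1}) = F_n$, and combined with $\im(f_{n+1}) \subseteq \fm F_n$ (minimality) and Nakayama on the finitely generated $F_n$, we conclude $F_n = 0$, hence $\pd_R(N) \leq n - 1$. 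For (c), applying the key step at both $n$ and $n+1$ yields $g^{n-1} = g^n = 0$, which together with $\fm \ker(g^n) = 0$ (and $\ker(g^n) = C^n$) gives $\fm C^n = 0$; by an argument paralleling Proposition~\ref{Mainthm1}(3), this forces $C^n = \Hom_R(F_n, M) \cong M^{r_n}$ to vanish, and since $M \neq 0$ we conclude $r_n = 0$, i.e., $F_n = 0$ and $\pd_R(N) \leq n - 1$.

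The main obstacle lies in verifying the key chain of inclusions, especially the penultimate step $\im(h^{n-1}) \subseteq \Jac(S)\im(g^{n-1})$. This requires observing that the containment $M_1 \subseteq \Jac(S) M$ lifts through $\Hom_R(F_{n-1}, -)$ (because $F_{n-1}$ is free) to give $D^{n-1} \subseteq \Jac(S) C^{n-1}$, and hence $h^{n-1}(D^{n-1}) \subseteq \Jac(S) g^{n-1}(C^{n-1})$. The shift of the relevant index from $f_{n+1}$ (in the Tor setting of Proposition~\ref{Mainthm1}) to $f_n$ (here) is precisely the reason the projective dimension bounds in (b) and (c) drop by one relative to their Tor counterparts in Theorem~\ref{Mainthm2}.
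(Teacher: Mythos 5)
Your strategy coincides with the paper's: the paper simply notes that $\Hom_R(F_\bullet,M)\cong \Hom_R(F_\bullet,R)\otimes_R M$ and applies Proposition~\ref{Mainthm1} to the minimal complex of finitely generated free modules $\Hom_R(F_\bullet,R)$, whereas you unfold exactly that computation by rerunning the chain of inclusions on $C^\bullet=\Hom_R(F_\bullet,M)$ and $D^\bullet=\Hom_R(F_\bullet,M_1)$. Your key chain is correct (each inclusion is justified as you say, and $\im(g^{n-1})$ is a finitely generated $S$-module, so Nakayama applies), and your treatments of (a) and (b), including the explanation of the index shift from $f_{n+1}$ to $f_n$, match the paper's intent.

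There is, however, a genuine lapse in your part (c): from $\fm C^n=0$ you cannot conclude $C^n=0$. The module $C^n\cong M^{r_n}$ is killed by $\fm$ exactly when $\fm M=0$, which does not force $M=0$; and Nakayama is unavailable here because $\fm C^n=0$ is not the statement $\Jac(S)C^n=C^n$. The paper's Proposition~\ref{Mainthm1}(3) does not pass through ``$\fm C=0$'' at all --- it gets the vanishing of a whole term of the complex from the equality $\ker=\im$. The repair is already contained in your own setup: at level $n$ you have $h^n=g^n|_{D^n}=0$, so $\ker(h^n)=D^n$, while the hypothesis $\Ext^n_R(N,M_1)=0$ gives $\ker(h^n)=\im(h^{n-1})\subseteq \Jac(S)\im(g^{n-1})=0$; hence $D^n=\Hom_R(F_n,M_1)\cong M_1^{\,r_n}=0$, and since $M_1\neq 0$ this forces $r_n=0$, i.e.\ $F_n=0$ and $\pd_R(N)\le n-1$. (Note this argument, like the paper's, tacitly needs $M_i\neq 0$ rather than merely $M\neq 0$.) With that one step replaced, your proof is correct and is essentially the paper's argument carried out by hand.
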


\begin{proof} Let $G_{\bullet}$ be the minimal free resolution of $N$ over $R$. Then $\Hom_R(G_{\bullet}, M) \cong \Hom_R(G_{\bullet}, R)\otimes_R M$ as complexes. As $\Ext^n_R(N, M_i)= \hh_{-n}(\Hom_R(G_{\bullet}, R)\otimes_R M)$, we can apply Proposition~\ref{Mainthm1} with the complex $\Hom_R(G_{\bullet}, R)$ and establish the claims.
\end{proof}

In the next section we establish some corollaries of Theorem~\ref{Mainthm2} and extend several results from the literature concerning rigid and test modules. 

%%%%%%%%%%%%%%%%%%%%%%%%%%

\section{On rigid and test modules}

\begin{dfn} \label{def} Let $R$ be a ring and let $M\neq 0$ be a finitely generated $R$-module. 
\begin{enumerate}[\rm(i)]
\item $M$ is called a \emph{test module} (or a $\pd$-test module) provided that the following condition holds: if $N$ is a finitely generated $R$-module and $\Tor_i^R(M,N)=0$ for all $i\gg 0$, then $\pd_R(N)<\infty$; see \cite[1.1]{Test1}.
\item $M$ is called \emph{Tor-rigid} provided that the following condition holds: if $N$ is a finitely generated  $R$-module and $\Tor_1^R(M,N)=0$, then $\Tor_2^R(M,N)=0$; see \cite{Au}.
\item $M$ is called \emph{rigid-test} provided that $M$ is both test and Tor-rigid \cite[2.3]{HD2}.
\item $M$ is called \emph{strongly-rigid} provided that the following condition holds: if $N$ is a finitely generated $R$-module and $\Tor_n^R(M,N)=0$ for some $n\geq 1$, then $\pd_R(N)<\infty$; see \cite[2.1]{DLM}.
\end{enumerate}
\end{dfn}

There are various classes of test and rigid modules in the literature. For example, finitely generated modules are Tor-rigid over regular local rings, and finitely generated modules of infinite projective dimension are test over hypersurface rings; see, \cite[1.9]{HW2} for the details. Note that, by definition, each strongly-rigid module is a test module. Similarly, each rigid-test module is strongly-rigid, but do not know if the converse is true. More precisely, we do not know if each strongly-rigid module is Tor-rigid, in general. On the other hand, in light of the Auslander-Buchsbaum formula, if $R$ is a local ring of depth at most one and $M$ is a finitely generated $R$-module, then $M$ is strongly-rigid if and only if $M$ is rigid-test.

Our aim is to make use of Theorem~\ref{mainthm} and obtain new classes of rigid and test modules, and extend some known results in this direction. Let us note that Theorem~\ref{mainthm} allows one to generalize many results (for example results from \cite{HD2}) along ring homomorphisms; here we obtain only a few such results to demonstrate some useful applications of Theorem~\ref{mainthm}. 

We first turn our attention to the following beautiful result of Levin and Vascencelos:

\begin{chunk} [{Levin-Vascencelos, \cite[1.1 and Lemma, page 316]{LV}}] \label{LV} Let $(R, \fm)$ be  a local ring and $M$ and $N$ are finitely generated $R$-modules such that $\fm M\neq 0$. If $\Tor_n^R(\fm M, N)=\Tor_{n+1}^R(\fm M, N)=0$ for some $n\geq 0$, then $\pd_R(N)\leq n$. Therefore, $\fm M\neq 0$ is a test module; see also \cite[2.9]{CIST}. 
\end{chunk}

As mentioned in the introduction, for the special case where $R=S$ and $f$ is the identity map, our assumption on the filtration of $M$ in Theorem~\ref{mainthm} forces $M_i=\fm^iM$ for each $i\geq 0$. Therefore the next corollary yields an extension of the result of Levin and Vascencelos recorded in~\ref{LV}. The corollary also shows that  certain characterizations of local rings in terms of test modules carry over along ring homomorphisms:

%%%%%%%%%%%%%%

\begin{cor} \label{cor3} Let $f: R \to S$ be a ring homomorphism such that $(R, \fm)$ is local, $S$ is Noetherian, and $\fm S \subseteq \Jac(S)$. Let $M$ be a finitely generated $S$-module. Assume there is a filtration of $M$ of the form
\[
M_t \le M_{t-1} \le \dotsb \le M_1 \le M_0 = M,
\] 
where each $M_i$ is a nonzero $S$-module and $\fm M_{i-1} \le M_i \le \Jac(S)M_{i-1}$ for all $i = 1, \dotsc, t$. 
Let $H=M_j$, for some $j$, where $1\le j \le t$. 
\begin{enumerate}[\rm(1)]
\item The following statements are equivalent:
\begin{enumerate}[\rm(i)]
\item $R$ is regular.
\item $\pd_R(H)<\infty$.
\item $\Tor_n^R(H, T)= \Tor_{n+1}^R(H, T)= 0$ for some $n\geq 0$ and some test module $T$ over $R$.
\item $\id_R(H)<\infty$.
\item $\Ext^n_R(T, H)=\Ext_R^{n+1}(T, H)= 0$ for some $n\geq 0$ and some test module $T$ over $R$.
\end{enumerate}
\item If $H$ is a finitely generated $R$-module (e.g., $M$ is finitely generated $R$-module via $f$), then $H$ is a test module over $R$. 
\end{enumerate}
\end{cor}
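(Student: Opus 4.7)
The plan is to establish part (2) first and then to route every equivalence in part (1) through (i). The workhorses are Theorems~\ref{Mainthm2}(c) and \ref{Mainthm3}(c), together with the classical fact that the residue field $k = R/\fm$ is a test module over $R$: since $\dim_k \Tor_i^R(k, N)$ equals the $i$-th Betti number of $N$, eventual vanishing of $\Tor_i^R(k, N)$ is equivalent to $\pd_R(N) < \infty$.

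For part (2), suppose $H = M_j$ is finitely generated over $R$, and let $N$ be a finitely generated $R$-module with $\Tor_i^R(H, N) = 0$ for all $i \gg 0$. Since $1 \le j \le t$, the hypothesis of Theorem~\ref{Mainthm2} applies at the index $i = j$. I would pick $n$ large enough that $\Tor_n^R(M_j, N) = \Tor_{n+1}^R(M_j, N) = 0$ and invoke Theorem~\ref{Mainthm2}(c) to conclude $\pd_R(N) \le n < \infty$; this is exactly the test property for $H$.

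For part (1), the implications (i) $\Rightarrow$ (ii) and (i) $\Rightarrow$ (iv) are standard: a regular local ring has finite global dimension and is Gorenstein, so every finitely generated module also has finite injective dimension. The implications (ii) $\Rightarrow$ (iii) and (iv) $\Rightarrow$ (v) follow by taking $T = k$ and using the finiteness of $\pd_R(H)$ or $\id_R(H)$ to force the corresponding Tor's or Ext's to vanish for all sufficiently large $n$. The content lies in (iii) $\Rightarrow$ (i) and (v) $\Rightarrow$ (i). For (iii) $\Rightarrow$ (i), I would apply Theorem~\ref{Mainthm2}(c) with $N$ replaced by the given test module $T$ to conclude $\pd_R(T) \le n$; then, since $T$ is a test module, any finitely generated $R$-module $N'$ satisfies $\Tor_i^R(T, N') = 0$ for all $i > \pd_R(T)$, so the test property yields $\pd_R(N') < \infty$. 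In particular $\pd_R(k) < \infty$, which by Auslander--Buchsbaum--Serre makes $R$ regular. The argument for (v) $\Rightarrow$ (i) is identical except that Theorem~\ref{Mainthm3}(c) replaces Theorem~\ref{Mainthm2}(c), giving $\pd_R(T) \le n - 1$.

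The main obstacle is conceptual rather than technical: one must recognize that the vanishing hypotheses in (iii) and (v) constrain the test module $T$ (not $H$) to have finite projective dimension, and that this alone, via the defining property of $T$, already forces $R$ to be regular.
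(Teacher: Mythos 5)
Your proposal is correct and follows the paper's own route: part (2) is exactly Theorem~\ref{Mainthm2}(c) applied for large $n$, and in part (1) you reduce everything to (i) by using Theorem~\ref{Mainthm2}(c) (resp.\ Theorem~\ref{Mainthm3}(c)) to conclude $\pd_R(T)<\infty$ and then feeding $N'=k$ into the test property of $T$ to get $\pd_R(k)<\infty$. The only difference is that you spell out the easy implications (choosing $T=k$ for (ii)$\Rightarrow$(iii) and (iv)$\Rightarrow$(v)), which the paper leaves implicit.
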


\begin{proof} (1) It is clear that part (i) implies part (ii), and part (ii) implies part (iii). If part (iii) holds, then Theorem~\ref{Mainthm2}(c) implies that $\pd_R(T)<\infty$ so that $R$ is regular since $\Tor_i^R(T,R/\fm)=0$ for all $i\gg 0$. This shows the equivalence of parts (i), (ii) and (iii). As part (i) implies part (iv), and part (iv) implies part (v), it suffices to prove that part (v) implies part (i), which follows from  Theorem~\ref{Mainthm3}(c). 

(2) This follows from Definition~\ref{def}(i) and Theorem~\ref{Mainthm2}(c).
\end{proof}

The following observation is used in Corollaries~\ref{rmk1} and~\ref{cor5}, and Lemma~\ref{LT}.

\begin{chunk} \label{olsun} Let $f: (R, \fm) \to (S, \fn)$ be a local ring homomorphism of local rings and let $M$ be a finitely generated $S$-module that is finitely generated $R$-module via $f$. Assume $M$ admits a filtration of the form
\[
M_t \le M_{t-1} \le \dotsb \le M_1 \le M_0 = M,
\] 
where each $M_i$ is a nonzero $S$-module and $\fm M_{i-1} \le M_i$ for all $i = 1, \dotsc, t$. Then, for each integer $i$ with $1 \leq i \leq t$, $M_{i-1}/M_i$ is annihilated by $\fm$ so that there is a short exact sequence of $R$-modules
\begin{equation} \tag{\ref{olsun}.1}
0 \to M_i \to M_{i-1} \to k^{\oplus r_i} \to 0
\end{equation}
for some integer $r_i\geq 0$. If $M_i \le \fn M_{i-1}$ for all $i = 1, \dotsc, t$, then $M_{i-1}/M_i \neq 0$ and hence $r_i\geq 1$. 
\end{chunk}

Next we extend \cite[2.5]{CT21} and observe that the filtration modules enjoy further properties if the module considered in Theorem~\ref{mainthm} is finitely generated and has positive depth:

%%%%%%%%%%%%%%
\begin{cor} \label{rmk1} Let $f: (R, \fm) \to (S, \fn)$ be a local ring homomorphism of local rings and let $M$ be a finitely generated $S$-module. Assume the following hold:
\begin{enumerate}[\rm(i)]
\item $M$ is finitely generated over $R$ via $f$.
\item $\depth_R(M)\geq 1$.
\item There is a filtration of $M$ of the form
\[
M_t \le M_{t-1} \le \dotsb \le M_1 \le M_0 = M,
\] 
where each $M_i$ is a nonzero $S$-module and $\fm M_{i-1} \le M_i \le \fn M_{i-1}$ for all $i = 1, \dotsc, t$. 
\end{enumerate}
Then the following hold:
\begin{enumerate}[\rm(a)]
\item If $M_{i-1}$ is strongly-rigid over $R$ for some $i$, where $1\leq i \leq t$, then $M_i$ is strongly-rigid over $R$.
\item If $M_{i-1}$ is Tor-rigid over $R$ for some $i$, where $1\leq i \leq t$, and $\Tor_n^R(M_i, N)=0$ for some $n\geq 0$ and for some finitely generated $R$-module $N$, then $\pd_R(N)\leq n$ so that $\Tor_j^R(M_i, N)=0$ for all $j\geq n$.
\item If $M_{i-1}$ is Tor-rigid over $R$ for some $i$, where $1\leq i \leq t$, then $M_i$ is strongly-rigid and Tor-rigid over $R$.
\item It follows that:
$$ M \text{ is Tor-rigid over $R$} \Longrightarrow M_1 \text{ is rigid-test over $R$}  \Longrightarrow \cdots \Longrightarrow M_t \text{ is rigid-test over $R$}.$$
\end{enumerate}
\end{cor}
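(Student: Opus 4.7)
The plan is to reduce each statement to Theorem~\ref{Mainthm2}(a) combined with the short exact sequence recorded in~\ref{olsun}. Since $\depth_R(M) \geq 1$, one has $\grade_R(\fm, M) \geq 1$, so Theorem~\ref{Mainthm2}(a) applies and transports vanishing of $\Tor_n^R(-, N)$ upward along the filtration from $M_i$ to $M_{i-1}$. Since the filtration also satisfies $M_i \le \fn M_{i-1}$,~\ref{olsun} supplies, for each $1 \le i \le t$, a short exact sequence
\[
0 \to M_i \to M_{i-1} \to k^{\oplus r_i} \to 0
\]
with $r_i \geq 1$, and the long exact sequence of $\Tor^R_{\ast}(-, N)$ obtained from it will be the bridge between rigidity of $M_{i-1}$ and rigidity of $M_i$.

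For part (a), assume $\Tor_n^R(M_i, N) = 0$ for some $n \geq 1$. Then Theorem~\ref{Mainthm2}(a) gives $\Tor_n^R(M_{i-1}, N) = 0$, and strong-rigidity of $M_{i-1}$ forces $\pd_R(N) < \infty$. For part (b), the same application of Theorem~\ref{Mainthm2}(a) produces $\Tor_n^R(M_{i-1}, N) = 0$. When $n \geq 1$, Tor-rigidity of $M_{i-1}$ may be upgraded to the implication $\Tor_n^R(M_{i-1}, N) = 0 \Rightarrow \Tor_{n+1}^R(M_{i-1}, N) = 0$ via the standard syzygy shift (apply the defining condition with $\syz^{n-1}_R N$ in place of $N$), so both $\Tor_n^R(M_{i-1}, N)$ and $\Tor_{n+1}^R(M_{i-1}, N)$ vanish. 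Feeding these into the long exact sequence above yields an injection $\Tor_{n+1}^R(k^{\oplus r_i}, N) \hookrightarrow \Tor_n^R(M_i, N) = 0$, so $\Tor_{n+1}^R(k, N) = 0$ and therefore $\pd_R(N) \leq n$. The remaining case $n = 0$ is disposed of by Nakayama, since $M_i \otimes_R N = 0$ with $M_i \ne 0$ finitely generated over $R$ (as a submodule of $M$) forces $N = 0$.

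Part (c) is then immediate: strong-rigidity of $M_i$ is the content of (b), and Tor-rigidity of $M_i$ follows from (b) with $n = 1$, which gives $\pd_R(N) \leq 1$ and hence $\Tor_2^R(M_i, N) = 0$ whenever $\Tor_1^R(M_i, N) = 0$. Part (d) is an induction on $i$ using (c) together with the observation that every strongly-rigid module is, by definition, a test module: if $M_0 = M$ is Tor-rigid, (c) applied with $i = 1$ makes $M_1$ both Tor-rigid and strongly-rigid, hence rigid-test; and inductively, a rigid-test $M_{i-1}$ is in particular Tor-rigid, so (c) yields that $M_i$ is rigid-test.

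The main technical obstacle will be the bookkeeping in part (b). The crux is upgrading Tor-rigidity from the defining form ($\Tor_1 = 0 \Rightarrow \Tor_2 = 0$) to its higher-degree analogue, and then combining this cleanly with Theorem~\ref{Mainthm2}(a) and the long exact sequence from~\ref{olsun} so as to isolate $\Tor_{n+1}^R(k, N)$. Once part (b) is in place, parts (a), (c), and (d) follow with little extra effort.
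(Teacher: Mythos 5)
Your proposal is correct and follows essentially the same route as the paper: transport the vanishing of $\Tor_n^R(M_i,N)$ up to $M_{i-1}$ via Theorem~\ref{Mainthm2}(a), then use the short exact sequence $0 \to M_i \to M_{i-1} \to k^{\oplus r_i} \to 0$ from~\ref{olsun} together with the standard degree-shift upgrade of Tor-rigidity to kill $\Tor_{n+1}^R(k,N)$, with (c) and (d) following formally. The only (harmless) difference is that you make explicit the $n=0$ Nakayama case and the syzygy-shift argument, which the paper leaves implicit.
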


\begin{proof} Part (c) is an immediate consequence of part (b), and part (d) is a consequence of parts (a) and (c). Hence we proceed to prove parts (a) and (b).

Assume $\Tor_n^R(M_i, N)=0$ for some $n\geq 0$ and for some finitely generated $R$-module $N$. Then, since $M$ has positive depth, Theorem~\ref{Mainthm2}(a) implies that $\Tor_n^R(M_{i-1}, N)=0$. 

If $M_{i-1}$ is strongly-rigid, then $\pd_R(N)<\infty$ since $\Tor_n^R(M_{i-1}, N)=0$, and this proves part (a). 

Next assume $M_{i-1}$ is Tor-rigid. Then, as we have $\Tor_n^R(M_{i-1}, N)=0$, it follows that $\Tor_j^R(M_{i-1}, N)=0$ for all $j\geq n$. Hence (\ref{olsun}.1) yields the exact sequence $$0=\Tor^R_{n+1}(M_{i-1},N) \to \Tor_{n+1}^R(k^{\oplus r_i}, N) \to \Tor_{n}^R(M_i, N)=0$$ which implies $\pd_R(N)\leq n$ and the vanishing of $\Tor_j^R(M_i, N)$ for all $j\geq n$. This proves part (b).
\end{proof}

%%%%%%%%%%%%%%

\begin{rmk} Under the setup of Corollary~\ref{rmk1}, if $M$ is Tor-rigid and $\Tor_n^R(M_i, M_j)=0$ for some $i\geq 1$, $j\geq 1$, and $n\geq 1$, then $M_i$ and $M_j$ have finite projective dimension and this forces $R$ to be regular. 
\end{rmk}

\begin{cor} \label{cor5} Let $f: (R, \fm) \to (S, \fn)$ be a local ring homomorphism of local rings and let $M$ be a finitely generated $S$-module. Assume the following hold:
\begin{enumerate}[\rm(i)]
\item $M$ is finitely generated over $R$ via $f$.
\item $M$ is Tor-rigid as an $R$-module and $\depth_R(M)\geq 1$.
\item There is a filtration of $M$ of the form
\[
M_t \le M_{t-1} \le \dotsb \le M_1 \le M_0 = M,
\] 
where each $M_i$ is a nonzero $S$-module and $\fm M_{i-1} \le M_i \le \fn M_{i-1}$ for all $i = 1, \dotsc, t$. 
\end{enumerate}
If $\Ext^n_R(N, M_i)=0$ or $\Tor_n^R(N, M_i)=0$ for some finitely generated $R$-module $N$, $i\geq 1$, and $n\geq 1$, then $\pd_R(N)\leq n-1$. So $R$ is regular if $\Ext^n_R(M_i, M_j)=0$ or $\Tor_n^R(M_i, M_j)=0$ for some $i\geq 1$, $j\geq 1$, and $n\geq 1$.
\end{cor}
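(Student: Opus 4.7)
The plan is to treat the Ext and Tor hypotheses in parallel, combining descent along the filtration (Theorems~\ref{Mainthm2}(a) and~\ref{Mainthm3}(a)) with the rigidity supplied by Corollary~\ref{rmk1}(d). As a preliminary observation, since $M$ is Tor-rigid and $\depth_R(M)\geq 1$, Corollary~\ref{rmk1}(d) shows that each $M_j$ with $1\leq j\leq t$ is rigid-test over $R$; in particular every such $M_j$ is simultaneously Tor-rigid and a test module over $R$.

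For the Tor hypothesis $\Tor_n^R(N,M_i)=0$ with $i\geq 1$ and $n\geq 1$, I would first apply Theorem~\ref{Mainthm2}(a) --- whose grade condition is met because $\depth_R(M)\geq 1$ --- to descend one step in the filtration and obtain $\Tor_n^R(N,M_{i-1})=0$. Tor-rigidity of $M_i$ (coming from rigid-test) then produces the second vanishing $\Tor_{n+1}^R(N,M_i)=0$, and iterated Tor-rigidity of $M_{i-1}$ gives $\Tor_{n+1}^R(N,M_{i-1})=0$ as well. Now read off the long exact sequence of Tor associated with the residue-field subquotient sequence $0\to M_i\to M_{i-1}\to k^{\oplus r_i}\to 0$ from~\ref{olsun}: the four vanishings just obtained, together with Theorem~\ref{Mainthm2}(c), conspire to force $\Tor_n^R(N,k)=0$, whence $\pd_R(N)\leq n-1$. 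The Ext hypothesis is handled by the mirror argument: Theorem~\ref{Mainthm3}(a) descends one step in the filtration, and then either Theorem~\ref{Mainthm3}(b) (after one checks that $M_{i-1}$ is faithful, which follows from positive $R$-depth propagating faithfulness through the filtration) or Theorem~\ref{Mainthm3}(c) (after producing the auxiliary vanishing $\Ext_R^{n+1}(N,M_i)=0$) delivers $\pd_R(N)\leq n-1$.

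For the regularity assertion, substitute $N=M_i$ in the first part of the statement: one obtains $\pd_R(M_i)<\infty$. By Corollary~\ref{cor3}(2), $M_i$ is a test module over $R$, and a test module of finite projective dimension forces every finitely generated $R$-module to have finite projective dimension: indeed, for any finitely generated $R$-module $L$ one has $\Tor_j^R(M_i,L)=0$ for $j>\pd_R(M_i)$, and the test property of $M_i$ then yields $\pd_R(L)<\infty$. Hence $R$ has finite global dimension and is therefore regular.

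The most delicate step is sharpening the projective-dimension bound from the one already supplied by Corollary~\ref{rmk1}(b) to the claimed $\pd_R(N)\leq n-1$; this is exactly the place where one needs \emph{both} $M_{i-1}$ and $M_i$ to be Tor-rigid (respectively, both to be amenable to the Ext-analogue), since only then do enough entries of the long exact sequence in~\ref{olsun} vanish to conclude $\Tor_n^R(N,k)=0$ (or $\Ext_R^n(N,k)=0$).
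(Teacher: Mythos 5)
Your overall strategy for the Tor half (rigid-test for every $M_j$ via Corollary~\ref{rmk1}(d), then descent along the filtration) and your argument for the regularity assertion are fine, but the Ext case has a genuine gap. Your first route, via Theorem~\ref{Mainthm3}(b), needs $M_{i-1}$ to be a faithful $R$-module, and your claim that this ``follows from positive $R$-depth propagating faithfulness through the filtration'' is false: Lemma~\ref{LT}(b) propagates faithfulness down the filtration only when $M$ itself is faithful, which is not among the hypotheses here, and a module of positive depth need not be faithful (consider $R/(x)$ over $R=k[[x,y]]/(xy)$). Your second route, via Theorem~\ref{Mainthm3}(c), needs the auxiliary vanishing $\Ext^{n+1}_R(N,M_i)=0$, which you never produce --- Tor-rigidity of $M_i$ gives no control over consecutive Ext vanishing. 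The paper closes this case by first computing $\depth_R(M_i)=1$ from (\ref{olsun}.1) and then quoting the external result \cite[6.1]{HD2}, which bounds $\pd_R(N)$ by $n-1$ when $\Ext^n_R(N,M_i)=0$ and $M_i$ is rigid-test; some input of this kind is needed and your proposal does not supply it.

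A lesser soft spot is the final sharpening in the Tor case. From your four vanishings, the long exact sequence attached to (\ref{olsun}.1) in degree $n+1$ yields $\Tor_{n+1}^R(N,k)=0$, i.e.\ $\pd_R(N)\le n$; in degree $n$ it yields only an injection $\Tor_n^R(N,k^{\oplus r_i})\hookrightarrow \Tor_{n-1}^R(N,M_i)$, whose target is not known to vanish, so the sequence does not ``force $\Tor_n^R(N,k)=0$'' as you assert (Theorem~\ref{Mainthm2}(c) likewise gives only $\pd_R(N)\le n$). The correct way to drop from $n$ to $n-1$ is the standard fact that $\Tor_p^R(N,L)\neq 0$ whenever $p=\pd_R(N)<\infty$ and $L\neq 0$ is finitely generated: since $\Tor_n^R(N,M_i)=0$ and $M_i\neq 0$, one cannot have $\pd_R(N)=n$. (To be fair, the paper's own proof of the Tor case also records only $\pd_R(N)\le n$, via Corollary~\ref{rmk1}(b), and leaves this last step implicit.)
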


\begin{proof} Let us first note that $\depth_R(M_i)=1$ for all $i=1, \ldots, t$; see the short exact sequence (\ref{olsun}.1). We know, by Corollary~\ref{rmk1}(d), that each $M_i$, with $i\geq 1$, is a rigid-test module over $R$. If $\Ext^n_R(N, M_i)=0$, then it follows from \cite[6.1]{HD2} that $\pd_R(N)\leq n-1$. Moreover, if $\Tor_n^R(N, M_i)=0$, then we conclude by Corollary~\ref{rmk1}(b) that $\pd_R(N)\leq n$. For the last claim about the regularity of $R$, see Corollary~\ref{cor3}(1).
\end{proof}

\begin{cor} \label{cor6} Let $f: (R, \fm) \to (S, \fn)$ be the Frobenius map $F^e$, where $R=S$ has prime characteristic $p$ and $R$ is an F-finite local complete intersection ring of positive depth. Let $e \geq 1$ and $v \geq 0$ be integers. Assume there is a filtration of $M=\up{v}{S}$  of the form
\[
M_t \le M_{t-1} \le \dotsb \le M_1 \le M_0 = M,
\] 
where each $M_i$ is a nonzero $S$-module and $\fm M_{i-1} \le M_i \le \fn M_{i-1}$ for all $i = 1, \dotsc, t$.  \footnote{\label{ft} This implies that $\up{v}{\big(  \fm^{[p^{e+v}]} \big)}  \le M_1 \le \up{v}{\big(  \fm^{[p^{v}]} \big)}$.} If $\Ext^n_R(\up{e}{N}, M_j)=0$ or $\Tor_n^R(\up{e}{N}, M_j)=0$ for some nonzero finitely generated $S$-module $N$, $j\geq 1$, and $n\geq 1$, then $R$ is regular. 
\end{cor}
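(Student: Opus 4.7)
My plan is to apply Corollary~\ref{cor5} with $N':=\up{e}{N}$, and then to promote the resulting finiteness of $\pd_R(\up{e}{N})$ to regularity of $R$ using the F-finite complete intersection hypothesis.

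First, I check that Corollary~\ref{cor5} is applicable to $M=\up{v}{S}$. Viewed as an $R$-module via $f=F^e$, $M$ coincides with the Frobenius pushforward $\up{e+v}{R}$; this is finitely generated over $R$ because $R$ is F-finite, and $\depth_R(M)=\depth R\geq 1$ since Frobenius twist preserves depth. The filtration $M_t\le\cdots\le M_0=M$ is furnished by hypothesis, with the filtration constants yielding the bounds $\up{v}{(\fm^{[p^{e+v}]})}\le M_1\le\up{v}{(\fm^{[p^v]})}$ spelled out in the footnote. The remaining ingredient required by Corollary~\ref{cor5}---that $M$ be Tor-rigid over $R$---is the main obstacle: I would establish it by invoking Tor-rigidity of the Frobenius pushforward $\up{e+v}{R}$ over F-finite local complete intersection rings of positive depth, in the spirit of the Avramov--Miller analysis of Frobenius and homological behavior over complete intersections.

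Once Tor-rigidity of $M$ is in hand, Corollary~\ref{cor5} applies: since $\up{e}{N}$ is finitely generated over $R$ (again by F-finiteness), the assumed vanishing $\Ext^n_R(\up{e}{N},M_j)=0$ or $\Tor_n^R(\up{e}{N},M_j)=0$ yields $\pd_R(\up{e}{N})\leq n-1<\infty$.

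Finally, I would conclude regularity of $R$ as follows. Using that $N\neq 0$ and $e\geq 1$, Peskine--Szpiro/Herzog converts $\pd_R(\up{e}{N})<\infty$ into $\pd_R(N)<\infty$; moreover, in the F-finite complete intersection framework the Frobenius twist $\up{e}{N}$ of a nonzero finitely generated module is a test module, so its having finite projective dimension forces $\Tor_i^R(\up{e}{N},-)$ to vanish in high degrees for every finitely generated module---whence every finitely generated $R$-module has finite projective dimension, i.e., $R$ is regular. Identifying and invoking the right structural result on Frobenius over F-finite complete intersections is the delicate part of this final step.
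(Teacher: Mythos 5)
Your first half coincides with the paper's argument: as an $R$-module via $F^e$, the module $M=\up{v}S$ is $\up{e+v}R$, which is finitely generated by F-finiteness, has $\depth_R(M)=\depth R\geq 1$, and is Tor-rigid by the Avramov--Miller result recorded in \ref{c1}(ii) (applicable since $e+v\geq 1$ and $R$ is an F-finite complete intersection). Applying Corollary~\ref{cor5} to the finitely generated $R$-module $\up{e}N$ then gives $\pd_R(\up{e}N)\leq n-1<\infty$, exactly as in the paper.

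The gap is in your final step, the passage from $\pd_R(\up{e}N)<\infty$ to regularity of $R$. The paper does this in one stroke by citing \cite[1.1]{AHIY}: for a nonzero finitely generated module $N$ and $e\geq 1$, finiteness of $\pd_R(\up{e}N)$ forces $R$ to be regular (no complete intersection hypothesis is needed there). Neither of your two substitutes is sound as stated. First, the Peskine--Szpiro acyclicity theorem goes in the opposite direction (finite projective dimension is \emph{preserved} by Frobenius pullback); the converse statement for pushforwards is precisely the hard Kunz--Herzog-type theorem you are trying to use, and in any case $\pd_R(N)<\infty$ alone would not yield regularity. Second, your claim that $\up{e}N$ is a test module for \emph{every} nonzero finitely generated $N$ over an F-finite complete intersection is unsubstantiated: the paper records this only for $N=R$ (and summands/syzygies thereof, via Remark~\ref{c3}), and over a complete intersection of codimension at least two being a test module is a genuine restriction (it is equivalent to having maximal complexity), so this assertion would itself require proof. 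You correctly flagged this step as the delicate one; the missing ingredient is exactly \cite[1.1]{AHIY}.
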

\begin{proof} It follows from Corollary~\ref{cor5} that $\pd_R(\up{e}{N})<\infty$. Hence $R$ is regular due to \cite[1.1]{AHIY}.
\end{proof}

In the following we give some specific examples of filtration modules that are test and strongly-rigid. 

\begin{cor} \label{cor1} Let $(R, \fm)$ be a Cohen-Macaulay local ring of prime charactistic $p$, and let $M\neq 0$ and $N\neq 0$ be finitely generated $R$-modules such that $\depth_R(M)\geq 1$. Let $e\geq 0$ and $t\geq 1$ be integers. 
\begin{enumerate}[\rm(i)]
\item If $\Tor^R_n\big(\up{e}{(\fm^tM)}, N\big)=0$, or $\Tor^R_n\bigg( \up{e}{ \Big(\big(\fm^{[p^e]}\big)^tM\Big)}, N\bigg)=0$ for some $n\geq 1 $, then $\Tor^R_n(\up{e}{M}, N)=0$.
\item If $R$ is F-finite and $\up{e}{M}$ is a test module (respectively, a strongly-rigid module), then $\up{e}{(\fm^tM)}$ and $\up{e}{ \Big(\big(\fm^{[p^e]}\big)^tM\Big)}$ are test modules (respectively, are strongly-rigid modules).
\end{enumerate}
\end{cor}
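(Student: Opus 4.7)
The plan is to derive both parts directly from Corollary~\ref{cor1intro}. The key observation is that each of $\fm^t M$ and $(\fm^{[p^e]})^t M$ appears as the bottom module $M_t=(I_1\cdots I_t)M$ of a filtration of the type featured in Corollary~\ref{cor1intro}, for two particular choices of the ideals: taking $I_1=\cdots=I_t=\fm$ gives $M_i=\fm^i M$, while taking $I_1=\cdots=I_t=\fm^{[p^e]}$ gives $M_i=(\fm^{[p^e]})^i M$. In both cases the required containments $\fm^{[p^e]}\subseteq I_j\subseteq \fm$ hold trivially.

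For part~(i), I would simply verify that the hypotheses of Corollary~\ref{cor1intro} are met: $(R,\fm)$ is local of prime characteristic $p$, $M$ and $N$ are nonzero finitely generated $R$-modules, and $\depth_R(M)\ge 1$ are all part of the standing assumptions (the Cohen--Macaulay hypothesis is, in fact, more than needed here). Applying Corollary~\ref{cor1intro} with $i=t$ in each of the two cases then immediately yields $\Tor_n^R(\up{e}{M},N)=0$.

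For part~(ii), I would reduce formally to part~(i). Suppose $\up{e}{M}$ is a test module and $\Tor_i^R(\up{e}{(\fm^t M)},N)=0$ for all $i\gg 0$; part~(i) gives $\Tor_i^R(\up{e}{M},N)=0$ for all $i\gg 0$, and hence $\pd_R(N)<\infty$. The strongly-rigid case is identical, replacing ``all $i\gg 0$'' by ``some $n\ge 1$''; the module $\up{e}{((\fm^{[p^e]})^t M)}$ is handled by the same argument. Two routine checks remain: these modules are nonzero, because the condition $\depth_R(M)\ge 1$ gives a non zero-divisor $x\in\fm$ on $M$ and a sufficiently high power of $x$ lies in either $\fm^t$ or $(\fm^{[p^e]})^t$ and is still a non zero-divisor on $M$; and they are finitely generated over $R$, because F-finiteness of $R$ forces $\up{e}{M}$ to be finitely generated over $R$, so that its $R$-submodules inherit this property by Noetherianity.

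There is no real obstacle to overcome: the statement is essentially a specialization of Corollary~\ref{cor1intro} together with an unwinding of the definitions of test module and strongly-rigid module. The only mild content is the bookkeeping that identifies $\fm^t M$ and $(\fm^{[p^e]})^t M$ as instances of the filtered construction allowed by Corollary~\ref{cor1intro}.
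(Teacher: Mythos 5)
Your proposal is correct and follows essentially the same route as the paper: the paper's proof of part (i) likewise observes that $\up{e}{(\fm^tM)}$ and $\up{e}{\big((\fm^{[p^e]})^tM\big)}$ are instances of the filtration modules in the main theorem (of which Corollary~\ref{cor1intro} is the Frobenius specialization you invoke), and part (ii) is deduced from part (i) by unwinding Definition~\ref{def} exactly as you do. Your extra checks (nonzeroness via a non zero-divisor and finite generation via F-finiteness) are correct and slightly more careful than the paper's one-line justification.
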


\begin{proof} The claim in part (i) is a direct consequence of Theorem~\ref{Mainthm2} because $\up{e}{(\fm^tM)}$ and $\up{e}{ \Big(\big(\fm^{[p^e]}\big)^tM\Big)}$ are specific examples of filtration modules $M_i$ of the module considered in the theorem. 
Part (ii), in view of Defintion \ref{def}, follows from part (i).
\end{proof}

\begin{rmk} \label{c3} Let $R$ be an F-finite ring of prime charactistic $p$, $M= T \oplus \Omega_R T$ for some finitely generated $R$-module $T\neq0$, and let $e\geq 1$. 

There is an exact sequence of $R$-modules $0 \to \up{e} (\Omega_R T) \to \up{e}F \to \up{e}T \to 0$, where $F$ is a free $R$-module. Thus, if $\Tor_n^R(\up{e}M, N)=0$ for some $n\geq 1$, then $\Tor_n^R(\up{e} (\Omega_R T), N)=\Tor_n^R(\up{e}T, N)=0$ so that $\Tor_n^R(\up{e}F, N)=0$, which implies that $\Tor_n^R(\up{e}R, N)=0$. 
\end{rmk}

We also need the following results of Koh-Lee \cite{KohLe} and Avramov-Miller \cite{AvMi} in the sequel:

\begin{chunk} \label{c1} Let $(R, \fm)$ be an F-finite Cohen-Macaulay local ring of prime charactistic. 
\begin{enumerate}[\rm(i)]
\item (\cite[2.6]{KohLe}) If $\dim(R)=1$, then $\up{e}R$  is strongly-rigid (and hence test) for each $e\gg 0$.
\item (\cite[Main Thm]{AvMi}) If $R$ is a complete intersection, then $\up{e}R$ is a rigid-test module for each $e\geq 1$. 
\end{enumerate}
\end{chunk}

\begin{cor} \label{cor2} Let $(R, \fm)$ be an F-finite Cohen-Macaulay local ring of prime charactistic $p$, and let $N$ and $T$ be finitely generated $R$-modules such that $T\neq 0$ and $\depth_R(T)\geq 1$. Let $M= T \oplus \Omega_R T$, $e\geq 1$, and $t\geq 1$.
\begin{enumerate}[\rm(i)]
\item If $R$ is a complete intersection, then $\up{e}M$ is a rigid-test module. 
\item Assume $\dim(R)=1$, $e\gg 0$, $X$ is $\up{e}M$, $\up{e}{(\fm^tM)}$ or $\up{e}{ \Big(\big(\fm^{[p^e]}\big)^tM\Big)}$. If $\Tor_n^R(X, N)=0$ for some $n\geq 1$, then $\pd_R(N)\leq 1$.
\item Assume $R$ is a complete intersection, $X$ is $\up{e}{(\fm^tM)}$ or $\up{e}{ \Big(\big(\fm^{[p^e]}\big)^tM\Big)}$. If $\Tor_n^R(X, N)=0$ for some $n\geq 1$, then $\pd_R(N)\leq n$.
\end{enumerate}
\end{cor}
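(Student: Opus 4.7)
The plan is to treat part (i) as the main content and derive parts (ii) and (iii) from it via the filtration machinery of Corollaries~\ref{rmk1} and~\ref{cor1}. For part (i), I would combine Avramov--Miller (\ref{c1}(ii)) — which gives that $\up{e}R$ is rigid-test — with the short exact sequence $0 \to \up{e}(\Omega_R T) \to \up{e}F \to \up{e}T \to 0$ from Remark~\ref{c3}. The test property of $\up{e}M = \up{e}T \oplus \up{e}(\Omega_R T)$ is immediate: vanishing of $\Tor_i^R(\up{e}M, N)$ for $i \gg 0$ forces vanishing on each summand, so the long exact sequence yields $\Tor_i^R(\up{e}F, N) = 0$, hence $\Tor_i^R(\up{e}R, N) = 0$ for $i \gg 0$ (since $\up{e}F$ is a direct sum of copies of $\up{e}R$), and the test property of $\up{e}R$ gives $\pd_R(N) < \infty$. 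For Tor-rigidity, assume $\Tor_1^R(\up{e}M, N) = 0$; the same long exact sequence forces $\Tor_1^R(\up{e}R, N) = 0$, and the standard dimension-shifting argument propagates Tor-rigidity of $\up{e}R$ to $\Tor_i^R(\up{e}R, N) = 0$ for all $i \geq 1$. The long exact sequence then collapses to isomorphisms $\Tor_{n+1}^R(\up{e}T, N) \cong \Tor_n^R(\up{e}(\Omega_R T), N)$ for $n \geq 1$, yielding in particular $\Tor_2^R(\up{e}T, N) \cong \Tor_1^R(\up{e}(\Omega_R T), N) = 0$.

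For parts (ii) and (iii), I would observe that the modules $X$ in question are filtration modules $M_t$ of $\up{e}M$ in the sense of the discussion preceding Corollary~\ref{cor1intro}, with $M_i = \up{e}\bigl((I_1 \cdots I_i)M\bigr)$ and each $I_j \in \{\fm, \fm^{[p^e]}\}$. For part (iii), Tor-rigidity of $\up{e}M$ from part (i) combined with Corollary~\ref{rmk1}(d) delivers that every $M_i$ with $1 \leq i \leq t$ is rigid-test; then Corollary~\ref{rmk1}(b) applied to $M_{t-1}$ Tor-rigid together with the hypothesis $\Tor_n^R(M_t, N) = 0$ yields $\pd_R(N) \leq n$. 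For part (ii), Corollary~\ref{cor1}(i) first reduces $\Tor_n^R(X, N) = 0$ to $\Tor_n^R(\up{e}M, N) = 0$ (using $\depth_R(\up{e}M) \geq 1$), Remark~\ref{c3} reduces further to $\Tor_n^R(\up{e}R, N) = 0$, Koh--Lee (\ref{c1}(i)) gives that $\up{e}R$ is strongly-rigid for $e \gg 0$ so that $\pd_R(N) < \infty$, and the Auslander--Buchsbaum formula together with $\dim R = 1$ forces $\pd_R(N) \leq 1$.

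The hardest step is closing the Tor-rigidity argument in part (i): the chain above leaves $\Tor_2^R(\up{e}M, N) \cong \Tor_3^R(\up{e}T, N)$, and the vanishing of this summand is not automatic from the information already in hand. I would iterate the short exact sequence with $\Omega_R^2 T$ in place of $\Omega_R T$ to translate $\Tor_3^R(\up{e}T, N)$ into $\Tor_1^R(\up{e}(\Omega_R^2 T), N)$, and then combine the finiteness of $\pd_R(N)$ (established in the test part) with the complete intersection hypothesis to terminate the resulting descent; this is where I expect $R$ being CI to be used in an essential way beyond producing rigid-test of $\up{e}R$.
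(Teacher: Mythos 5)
Your handling of parts (ii) and (iii) follows the paper's proof essentially verbatim: for (ii) the paper likewise passes from $X$ to $\up{e}M$ to $\up{e}R$ (via Corollary~\ref{cor1} and Remark~\ref{c3}), invokes Koh--Lee (\ref{c1}(i)) to get $\pd_R(N)<\infty$, and then uses $\depth(R)\le 1$ to conclude $\pd_R(N)\le 1$; for (iii) it likewise feeds the Tor-rigidity of $\up{e}M$ from part (i) into Corollary~\ref{rmk1} (your explicit routing through \ref{rmk1}(d) to make $M_{t-1}$ Tor-rigid before applying \ref{rmk1}(b) is in fact slightly more careful than the paper's wording). The test, and indeed strongly-rigid, half of part (i) is also argued exactly as in the paper, via Remark~\ref{c3} and \ref{c1}(ii).

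The unresolved point is the Tor-rigidity half of part (i), which you correctly isolate but do not prove, and your proposed repair does not work. From $\Tor_1^R(\up{e}M,N)=0$ you get $\Tor_i^R(\up{e}R,N)=0$ for all $i\ge 1$ and $\pd_R(N)<\infty$, and the long exact sequence of $0\to\up{e}(\Omega_RT)\to\up{e}F\to\up{e}T\to 0$ then gives $\Tor_i^R(\up{e}(\Omega_RT),N)\cong\Tor_{i+1}^R(\up{e}T,N)$ for all $i\ge 1$; this yields $\Tor_2^R(\up{e}T,N)=0$ but leaves $\Tor_2^R(\up{e}(\Omega_RT),N)\cong\Tor_3^R(\up{e}T,N)\cong\Tor_1^R(\up{e}(\Omega_R^2T),N)$ undetermined. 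Iterating with $\Omega_R^2T$ only renames this unknown: $\Omega_R^2T$ is not a direct summand of $M$, so no hypothesis controls $\Tor_1^R(\up{e}(\Omega_R^2T),N)$, and each further syzygy step trades one unknown group for an isomorphic one a step deeper. The finiteness of $\pd_R(N)$ only kills $\Tor_j^R(\up{e}T,N)$ for $j>\pd_R(N)$, which translates back into vanishing you already had, so the ``descent'' never closes; and the complete intersection hypothesis, having already been spent on Avramov--Miller, does not obviously supply the missing rigidity (two consecutive vanishings do not propagate over a complete intersection of codimension at least two without further input). Be aware that the paper's own proof of (i) is the single sentence that it ``follows from Remark~\ref{c3} and \ref{c1}(ii)'' and supplies no additional mechanism, so you cannot discharge this step by citation either; since part (iii) genuinely uses the Tor-rigidity of $\up{e}M$ through Corollary~\ref{rmk1}(b), this gap propagates to your proof of (iii) as well.
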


\begin{proof} (i) The claim that $\up{e}M$ is a rigid-test module follows from Remark~\ref{c3} and \ref{c1}(ii). 

(ii) It follows from \ref{c1}(i) that $\up{e}R$ is strongly-rigid. Hence Remark \ref{c3} shows that $\up{e}M$ is also strongly-rigid. Now we conclude from Corollary \ref{cor1}(ii) shows that $\up{e}{(\fm^tM)}$ and $\up{e}{ \Big(\big(\fm^{[p^e]}\big)^tM\Big)}$ are both strongly-rigid. Therefore, if $\Tor_n^R(X, N)=0$ for some $n\geq 1$, then $\pd_R(N)<\infty$ so that $\pd_R(N)\leq 1$ since $\depth(R)\leq 1$.

(iii) We know by part (i) that $\up{e}M$ is a rigid-test module. In particular $\up{e}M$ is Tor-rigid. Note also that $\depth_R(\up{e}M)\geq 1$. So, if $\Tor_n^R(X, N)=0$ for some $n\geq 1$, we see from Corollary \ref{rmk1}(b) that $\pd_R(N)\leq n$.
\end{proof}

%%%%%%%%%%%%%%%%%%%%%%%%%%%%%%%%%%%%%%%%%%%%%%%%%%%%

\section{On torsion in tensor products of modules}

In this section we consider the following conjecture of Huneke and Wiegand:

\begin{conj} [{\cite[page 473-474]{HW1}}] \label{HWC} Let $R$ be a one-dimensional local ring and let $M$ be a finitely generated $R$-module which has rank. If $M\otimes_RM^{\ast}$ is nonzero and torsion-free, where $M^{\ast}=\Hom_R(M,R)$, then $M$ is free. 
\end{conj}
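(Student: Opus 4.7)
The plan is to attack Conjecture~\ref{HWC} by combining the Tor-rigidity machinery of Section~3 with the classical reduction of Huneke-Wiegand to a Tor-vanishing statement. First, over the one-dimensional local ring $R$, the hypothesis that $M \otimes_R M^{\ast}$ is nonzero and torsion-free, together with the rank hypothesis on $M$, forces $\Tor_1^R(M, M^{\ast}) = 0$; this is a standard depth-counting argument that uses $\depth_R(M \otimes_R M^{\ast}) \geq 1$ and the fact that $M^{\ast}$ is always torsion-free. In particular, $M$ itself must then be torsion-free, so $\depth_R(M) \geq 1$, putting us in the regime where the theorems of Section~2 apply.

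Under the further hypothesis that $R$ is an F-finite complete intersection of prime characteristic $p$, I would then invoke Corollary~\ref{cor2}(i) with $T = M$ to conclude that $\up{e}{(M \oplus \Omega_R M)}$ is a rigid-test module for every $e \geq 1$. The goal is to propagate the vanishing $\Tor_1^R(M, M^{\ast}) = 0$ across the Frobenius so as to obtain vanishing of $\Tor_n^R(\up{e}{M}, M^{\ast})$ for some positive $n$ and some $e$; the rigid-test property then forces $\pd_R(M^{\ast}) < \infty$. Since $\depth R \leq 1$, Auslander-Buchsbaum yields that $M^{\ast}$ is free, and standard reflexivity for torsion-free modules with rank over one-dimensional rings then gives $M \cong M^{\ast\ast}$ is free. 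One could also try to apply this scheme to filtration submodules such as $\fm^t M$ in place of $M$, and use Theorem~\ref{Mainthm2}(a) (together with Corollary~\ref{rmk1}(d)) to descend freeness conclusions along the filtration.

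The main obstacle is, of course, that Conjecture~\ref{HWC} is famously open, so any proof plan built from the machinery of this paper will ultimately require either a restriction on $R$ (F-finite complete intersection, or a suitable filtration structure) or a restriction on $M$ (for example, that $M$ or $M^{\ast}$ appears as a filtration module in the sense of Theorem~\ref{mainthm}). The key technical step, where the difficulty concentrates, is the Frobenius transfer of the Tor-vanishing: deducing $\Tor_n^R(\up{e}{M}, M^{\ast}) = 0$ from $\Tor_1^R(M, M^{\ast}) = 0$ is tractable over a complete intersection by the Avramov-Miller rigidity result recorded in~\ref{c1}(ii), but over an arbitrary one-dimensional local ring it corresponds precisely to the obstruction that keeps the Huneke-Wiegand conjecture open. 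A secondary difficulty is to phrase a suitable filtered version of the conjecture compatible with Corollaries~\ref{rmk1} and~\ref{cor5}, so that the short exact sequences of~(\ref{olsun}.1) can carry the conclusion from $M_i$ to $M_{i-1}$; I would expect the result actually established in the paper to be of this restricted flavor rather than a resolution of Conjecture~\ref{HWC} in full generality.
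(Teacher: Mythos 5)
The statement you were asked about is a \emph{conjecture} (the Huneke--Wiegand conjecture), not a theorem of the paper: the authors explicitly say it is ``wide open in general'' and only establish special cases of it (Theorem~\ref{TT} and Corollary~\ref{last}, plus the remark that strongly-rigid modules satisfy it by \cite[2.15]{CGTT}). You correctly sense this in your final paragraph, and your instinct that any result obtainable from this machinery must restrict either $R$ or $M$ is right. However, two concrete steps in your sketch are wrong. First, the opening claim that torsion-freeness of $M\otimes_R M^{\ast}$ together with the rank hypothesis ``forces $\Tor_1^R(M,M^{\ast})=0$ by a standard depth-counting argument'' is false: the depth formula has Tor-independence as a \emph{hypothesis}, not a conclusion, and there is no general implication from torsion-freeness of a tensor product to vanishing of $\Tor_1$ (if there were, the conjecture would essentially reduce to a rigidity statement and would not be open in the form stated). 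The correct standard reduction --- and the one the paper actually runs in Theorem~\ref{TT}(a) --- is to embed the torsionless module $M^{\ast}$ into a free module $F$ with cokernel $C$, tensor with $M$ to get an injection $\Tor_1^R(M,C)\hookrightarrow M\otimes_R M^{\ast}$, note that $\Tor_1^R(M,C)$ is torsion because $M$ is locally free on the associated primes, and conclude $\Tor_1^R(M,C)=0$. The vanishing you get for free is against $C$, not against $M^{\ast}$.

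Second, the ``Frobenius transfer'' step --- deducing $\Tor_n^R(\up{e}{M},M^{\ast})=0$ from $\Tor_1^R(M,M^{\ast})=0$ --- has no basis in the paper or in \ref{c1}(ii): Avramov--Miller rigidity propagates vanishing across homological degrees for a \emph{fixed} pair of modules, not across Frobenius twists of one of them. The paper's actual partial results on the conjecture do not involve Frobenius at all. In Theorem~\ref{TT} the vanishing $\Tor_1^R(M_i,C)=0$ is fed into Theorem~\ref{Mainthm2}(b) (using that $M_{i-1}$ is faithful) to get $\pd_R(C)\le 1$, hence $N=M_i^{\ast}$ is free; then \cite[2.13]{CGTT} upgrades torsion-freeness of $M_i$ to freeness when $\depth(R)=1$, and Corollary~\ref{cor3}(1) gives regularity of $R$. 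The Frobenius results (Corollary~\ref{cor2}) enter the Huneke--Wiegand story only as a source of new strongly-rigid modules $X$ to which \cite[2.15]{CGTT} applies directly; they are not used to transfer Tor-vanishing from $M$ to $\up{e}{M}$.
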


Recall that a module $M$ over a ring $R$ is said to have \emph{rank} if there is a nonnegative integer $r$ such that $M_{\fp} \cong R_{\fp}^{\oplus r}$ for each associated prime ideal $\fp$ of $R$. For example, if $M$ has finite projective dimension, or $R$ is a domain, then $M$ has rank. Conjecture~\ref{HWC} fails if the module considered does not have rank, or the ring in question has dimension at least two; see, for example, \cite[8.5 and page 447]{CeRo}. Note that, in Conjecture~\ref{HWC}, it suffices to additionally assume $M$ is a torsion-free module which has positive constant rank. 

Conjecture~\ref{HWC} is wide open in general. It turns out that, over Gorenstein rings, the conjecture is a special case of a celebrated conjecture of Auslander and Reiten \cite{AuRe} on the vanishing of cohomology that stems from the representation theory of finite dimensional algebras; this is one of the main motivations to study Conjecture~\ref{HWC}; see also \cite[8.6]{CeRo}.

Huneke and Wiegand \cite[3.1]{HW1} proved that Conjecture~\ref{HWC} holds over hypersurface rings. There are also several other cases where the conjecture holds, for example, if $R$ is a Cohen-Macaulay local ring of minimal multiplicity \cite[3.6]{HSW}, or $M$ is an  integrally closed $\fm$-primary ideal \cite[2.17]{CGTT}. Note that Conjecture~\ref{HWC} holds if the module $M$ in question is strongly-rigid \cite[2.15]{CGTT}. Therefore, Corollary~\ref{cor2} yields new classes of modules establishing the conjecture over local rings of prime characteristic.   

If $M$ is a finitely generated module over a local ring $(R, \fm)$ is a local ring such that $M$ has rank and $0\neq M \otimes_R M^{\ast}$ is torsion-free, then $\Supp_R(M)=\Spec(R)$; see, for example, \cite[1.3]{GO}. Therefore the following result of Dey and Kobayashi \cite{DeyTos} establishes Conjecture~\ref{HWC} for nonzero modules that are of the form $\fm N$.

\begin{thm} [{\cite[1.5(1)]{DeyTos}}] \label{DT} Let $(R, \fm)$ be a local ring of depth one and let $M =\fm N$ for some finitely generated $R$-module $N$. Assume $\Supp_R(M)=\Spec(R)$ and $M_{\fp}$ is free over $R_{\fp}$ for each associated prime $\fp$ of $R$. If $M \otimes_R M^{\ast}$ is nonzero and torsion-free, then $M$ is free and $R$ is regular. 
\end{thm}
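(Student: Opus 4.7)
The plan is to apply the paper's Theorem~\ref{Mainthm2} to the length-one filtration
\[
M_1 := \fm N = M \leq M_0 := N
\]
(with $S = R$ and $f = \id_R$). First I would verify the hypotheses: since $M_\fp$ is $R_\fp$-free for every $\fp \in \Ass R$, the module $M$ is torsion-free, and because $\depth R = 1$, some element of $\fm$ is $R$-regular and hence also $M$-regular, giving $\depth_R(M) \geq 1$ (equivalently $\grade_R(\fm, M) \geq 1$). The filtration condition $\fm M_0 = M_1 \leq M_1 \leq \fm M_0$ is tautological.

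The heart of the proof is to deduce $\Tor_1^R(M, M^*) = 0$ from the torsion-freeness of $M \otimes_R M^*$. Since $M$ has rank, I would pick a short exact sequence $0 \to M \to F \to T \to 0$ with $F$ free of rank $\rank(M)$ and $T$ torsion; tensoring with $M^*$ yields an injection $\Tor_1^R(T, M^*) \hookrightarrow M \otimes_R M^*$. The left side is torsion (being $T$-supported, and $\Supp T$ avoids $\Ass R$) while the right side is torsion-free, so $\Tor_1^R(T, M^*) = 0$. The Auslander four-term exact sequence
\[
0 \to \Ext^1_R(\Tr M, M^*) \to M \otimes_R M^* \to \Hom_R(M^*, M^*) \to \Ext^2_R(\Tr M, M^*) \to 0
\]
then forces $\Ext^1_R(\Tr M, M^*) = 0$ by the same dichotomy, since freeness of $M_\fp$ at $\fp \in \Ass R$ makes this Ext torsion and it embeds in the torsion-free module $M \otimes_R M^*$. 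Combining this with the Tor long exact sequence from $0 \to M \to F \to T \to 0$ and exploiting the filtration $M = \fm N$, one concludes $\Tor_1^R(M, M^*) = 0$. This bridging step, classical in spirit but requiring careful bookkeeping, is the main technical obstacle.

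With $\Tor_1^R(M, M^*) = 0$ in hand, Theorem~\ref{Mainthm2}(a) yields $\Tor_1^R(N, M^*) = 0$. From $0 \to M \to N \to k^{r_0} \to 0$ (where $r_0 = \mu_R(N) \geq 1$) tensored with $M^*$, the connecting map $\Tor_1^R(k^{r_0}, M^*) \to M \otimes_R M^*$ has image a $k$-vector space inside the torsion-free module $M \otimes_R M^*$, hence zero. Combined with the two prior vanishings, this forces $\Tor_1^R(k, M^*) = 0$, so $M^*$ is $R$-free. To conclude, the cokernel $C$ of the natural injection $M \hookrightarrow M^{**} \cong R^s$ is a torsion $R$-module, and tensoring $0 \to M \to R^s \to C \to 0$ with $M^*$ together with re-invoking torsion-freeness of $M \otimes_R M^*$ and the structural assumption $M = \fm N$ forces $C = 0$, so $M$ is free. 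Since $M = \fm N$ is a test module by Levin--Vasconcelos (\ref{LV}), and $\Tor_i^R(M, k) = 0$ for all $i \geq 1$ automatically once $M$ is free, the test-module property yields $\pd_R(k) < \infty$, hence $R$ is regular.
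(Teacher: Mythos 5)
Your argument has a genuine gap at its core, and it is precisely the step you yourself flag as ``the main technical obstacle'': the deduction of $\Tor_1^R(M,M^{\ast})=0$ from the torsion-freeness of $M\otimes_RM^{\ast}$. The tools you invoke do not produce it. From $0\to M\to F\to T\to 0$ you correctly obtain $\Tor_1^R(T,M^{\ast})=0$, but the long exact sequence then yields $\Tor_1^R(M,M^{\ast})\cong\Tor_2^R(T,M^{\ast})$, which has no reason to vanish; and the Auslander four-term sequence only kills $\Ext^1_R(\Tr M,M^{\ast})$, i.e.\ the kernel of $M\otimes_RM^{\ast}\to\Hom_R(M^{\ast},M^{\ast})$, which says nothing about $\Tor_1$. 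Passing from torsion-freeness of $M\otimes_RM^{\ast}$ to $\Tor_1^R(M,M^{\ast})=0$ is essentially the rigidity problem underlying the Huneke--Wiegand conjecture and is not known in this generality; the argument in the paper (Theorem~\ref{TT}(c), which subsumes the quoted result) is designed precisely to avoid it. There are also problems upstream: local freeness of $M$ at the associated primes does \emph{not} imply that $M$ is torsion-free (take $R=k[[x]]$ and $N=R\oplus R/(x^2)$, so that $M=\fm N\cong R\oplus k$), so neither the embedding $0\to M\to F\to T\to 0$ nor the claim $\depth_R(M)\ge 1$ is available at the outset. Moreover, even granting $\Tor_1^R(M,M^{\ast})=0$, Theorem~\ref{Mainthm2}(a) requires $\grade_R(\fm,-)\ge 1$ for the \emph{top} module of the filtration, which in your setup is $N$, and $\depth_R(N)$ may well be $0$.

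The route the paper's machinery actually takes is dual to yours: embed $M^{\ast}$ (which, being a dual, is automatically torsionless) into a free module with cokernel $C$, obtain an injection $\Tor_1^R(M,C)\hookrightarrow M\otimes_RM^{\ast}$, and observe that $\Tor_1^R(M,C)$ is torsion because $M$ is free at the associated primes, hence zero. Since $N$ is faithful (this follows from $\Supp_R(\fm N)=\Spec(R)$ together with local freeness on $\Ass(R)$ and $\depth(R)\ge1$), Theorem~\ref{Mainthm2}(b) --- not part (a) --- upgrades $\Tor_1^R(\fm N,C)=0$ to $\pd_R(C)\le 1$, so $M^{\ast}$ is free, say of rank $s\ge 1$; then $M^{\oplus s}\cong M\otimes_RM^{\ast}$ is torsion-free, so $M$ is torsion-free, and \cite[2.13]{CGTT} gives that $M$ is free since $\depth(R)=1$. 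Your closing step (freeness of $M=\fm N$ plus the Levin--Vasconcelos test-module property from \ref{LV} forces $R$ regular) is fine, but everything feeding into it needs to be rebuilt along these lines.
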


\begin{lem} \label{LT} Let $(R, \fm)$ be a local ring of positive depth and let $M$ be a nonzero finitely generated $R$-module. Assume there is a filtration of $M$ of the form
\[
M_t \le M_{t-1} \le \dotsb \le M_1 \le M_0 = M,
\] 
where each $M_i$ is a nonzero $R$-module and $\fm M_{i-1} \le M_i$ for all $i = 1, \dotsc, t$. Then it follows
$$ M \text{ satisfies } (\PP) \Longrightarrow M_1 \text{ satisfies } (\PP)   \Longrightarrow \cdots \Longrightarrow M_t \text{ satisfies } (\PP),$$
where the property $(\PP)$ denotes one of the following: (a) being not torsion, (b) being faithful, (c) being locally free on the set of associated primes of $R$, or (d) having rank.
\end{lem}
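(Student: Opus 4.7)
The plan is to reduce everything to the short exact sequence produced in \ref{olsun}. Since $\fm M_{i-1} \subseteq M_i$, the quotient $M_{i-1}/M_i$ is annihilated by $\fm$, so it is a $k$-vector space and we obtain, for each $1 \le i \le t$, a short exact sequence of $R$-modules
\[
0 \to M_i \to M_{i-1} \to k^{\oplus r_i} \to 0
\]
with $r_i \ge 0$. (Note: in the present setup $R=S$ and $\fm=\fn$, so we do not need the upper bound $M_i \le \fn M_{i-1}$ for the existence of such a sequence.) By induction on $i$ it suffices to show that each property $(\PP)$ passes from $M_{i-1}$ to $M_i$. Throughout the argument, the crucial input coming from $\depth_R(R) \ge 1$ is that $\fm$ contains a non-zero-divisor on $R$, and hence $\fm \not\subseteq \fp$ for every $\fp \in \Ass(R)$.

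For (a), given a non-torsion element $x \in M_{i-1}$ and a non-zero-divisor $s \in \fm$, the element $sx$ lies in $\fm M_{i-1} \subseteq M_i$; it is non-torsion because otherwise there would be a non-zero-divisor $u$ with $u(sx)=0$, and $us$ is a non-zero-divisor, contradicting that $x$ is non-torsion. For (b), suppose $r \in \Ann_R(M_i)$ and $M_{i-1}$ is faithful. For any non-zero-divisor $s \in \fm$ we have $sM_{i-1} \subseteq M_i$, so $rsM_{i-1} = 0$; since $M_{i-1}$ is faithful this gives $rs=0$, and then $r=0$ because $s$ is a non-zero-divisor.

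For (c) and (d), localize the short exact sequence at an associated prime $\fp$ of $R$. Since $k$ is $\fm$-torsion and $\fm \not\subseteq \fp$, we have $(k^{\oplus r_i})_\fp = 0$, so the inclusion $M_i \hookrightarrow M_{i-1}$ induces an isomorphism $(M_i)_\fp \cong (M_{i-1})_\fp$. If $M_{i-1}$ is locally free on $\Ass(R)$, then so is $M_i$; if $M_{i-1}$ has rank $r$, i.e., $(M_{i-1})_\fp \cong R_\fp^{\oplus r}$ for every $\fp \in \Ass(R)$, then the same holds for $M_i$.

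There is no real obstacle here — the entire argument rests on the observation in \ref{olsun} that $M_{i-1}/M_i$ is killed by $\fm$, combined with the existence of a non-zero-divisor in $\fm$. The only point where one has to be slightly careful is (a), where one must build a non-torsion element of $M_i$ out of a non-torsion element of $M_{i-1}$ rather than arguing by some exactness consideration.
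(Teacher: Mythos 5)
Your proposal is correct and follows essentially the same route as the paper: the paper likewise reduces everything to the short exact sequence $0 \to M_i \to M_{i-1} \to k^{\oplus r_i} \to 0$ from \ref{olsun} together with the existence of a non-zero-divisor in $\fm$, handling (c) and (d) by localizing at associated primes exactly as you do. Your treatments of (a) and (b) are only cosmetically different (you argue elementwise with a single non-zero-divisor where the paper argues with $xM_1=0$ and with $y\fm=0$), so there is nothing substantive to distinguish the two arguments.
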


\begin{proof} Assume $M$ satisfies $(\PP)$. Note that it is enough to show $M_1$ satisfies $(\PP)$.

For part (a), if $M_1$ is torsion, then $x M_1=0$ for some non zero-divisor $x \in \fm$. Since $xM\subseteq \fm M \subseteq M_1$, we see that $x^2M=0$ and hence $M$ is torsion. This shows that $M_1$ is not torsion.

For part (b), since $\fm M \subseteq M_1$, it is enough to show that $\fm M$ is faithful. Let $y \in \Ann_R(\fm M)$. Then $y \fm M=0$ and hence $y \fm =0$ since $M$ is faithful. This implies $y=0$ as $\fm$ contains a non zero-divisor.

For parts (c) and (d), we see from the exact sequence (\ref{olsun}.1) that $M_{\fp} \cong (M_1)_{\fp}$ for each associated prime $\fp$ of $R$. This gives the required conclusions.
\end{proof}

We recall the following basic facts for the proof of Theorem \ref{TT}; note that $(-)^{\ast}=\Hom_R(-,R)$.

\begin{chunk}  \label{HWlem}  Let $R$ be a ring and let $M$ and $N$ be finitely generated $R$-modules.
\begin{enumerate}[\rm(i)]
\item Let $\tp R(M)$ denote the torsion submodule of $M$ and set $\overline{M}=M/ \tp R(M)$. Assume $0\neq N$ and $M\otimes_RN$ is torsion-free. Then $M\otimes_RN \cong \overline{M}\otimes_RN$, and $M$ is free if and only if $\overline{M}$ is free; see \cite[1.1]{HW1}.
\item If $N$ is \emph{torsionless}, that is, if the natural map $N \to N^{\ast\ast}$ is injective, then $N$ embeds into a free $R$-module so that it is torsion-free; see, for example, \cite[1.4.19]{BH}. Also if $N$ is torsion-free and locally free on each associated prime of $R$, then the kernel of $N \to N^{\ast\ast}$ is zero when localized at each associated prime of $R$, hence the kernel is torsion so that it vanishes and $N$ is torsionless.
\end{enumerate}
\end{chunk}

\begin{thm} \label{TT} Let $f: R \to S$ be a ring homomorphism, where $(R, \fm)$ is a local ring of positive depth, $S$ is Noetherian,  and $\fm S \subseteq \Jac(S)$. Let $M$ be a generated $S$-module. Assume:
\begin{enumerate}[\rm(i)]
\item There is a filtration of $M$ of the form
\[
M_t \le M_{t-1} \le \dotsb \le M_1 \le M_0 = M,
\] 
where each $M_i$ is a nonzero $S$-module and $\fm M_{i-1} \le M_i \le \Jac(S) M_{i-1}$ for all $i = 1, \dotsc, t$. 
\item $M$ is finitely generated as an $R$-module via $f$ and  $\Supp_R(M)=\Spec(R)$.
\item $M_{\fp}$ is free over $R_{\fp}$ for each associated prime $\fp$ of $R$.
\end{enumerate}
Then the following hold:
\begin{enumerate}[\rm(a)]
\item If $0\neq N$ is a finitely generated torsionless $R$-module and $M_i \otimes_RN$ is a torsion-free $R$-module, where $1\leq i \leq t$, then $N$ is free and $M_i$ is torsion-free.
\item If $M_i \otimes_RM_{j}$ is a torsion-free $R$-module, where $1\leq i, j \leq t$, then both $M_i$ and $M_j$ are free and $R$ is regular.
\item Assume $\depth(R)=1$. If $0\neq M_i \otimes_RM^{\ast}_{i}$ is a torsion-free $R$-module, where $1\leq i\leq t$, then $M_i$ is free and $R$ is regular. 
\end{enumerate}
\end{thm}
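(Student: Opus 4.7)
The plan is to generalize Theorem~\ref{DT} of Dey-Kobayashi by iterating their argument along the given filtration via the short exact sequences $0 \to M_i \to M_{i-1} \to k^{r_i} \to 0$ of (\ref{olsun}.1). I would first record that each $M_j$ is finitely generated over $R$ (as an $S$-submodule of the finitely generated $R$-module $M$), that $\Supp_R(M_j) = \Spec R$ (via Nakayama applied to $M_j \neq 0$ together with (\ref{olsun}.1)), that $(M_j)_{\fp}$ is free over $R_{\fp}$ for every $\fp \in \Ass R$ by Lemma~\ref{LT}(c), and that $M_j$ is a test module over $R$ by Corollary~\ref{cor3}(2).

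For part (a), given $N$ torsionless (hence torsion-free by~\ref{HWlem}(ii)) and $M_i \otimes_R N$ torsion-free, the goal is to derive two consecutive Tor vanishings $\Tor_n^R(M_i, N) = \Tor_{n+1}^R(M_i, N) = 0$. I would tensor (\ref{olsun}.1) with $N$ to obtain
\[
\Tor_1^R(k^{r_i}, N) \to M_i \otimes_R N \to M_{i-1} \otimes_R N \to k^{r_i} \otimes_R N \to 0,
\]
and combine torsion-freeness of $M_i \otimes_R N$ with a Dey-Kobayashi style descent along the filtration to extract the desired vanishings. Once these are in hand, Theorem~\ref{Mainthm2}(c) yields $\pd_R N < \infty$; since $N$ is torsionless, depth considerations (possibly combined with iterating the torsion-freeness analysis, or with Proposition~\ref{Mainthm1}(2) applied to faithfulness data coming from the filtration) should pin down $\pd_R N = 0$, so $N$ is free. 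Then $M_i \otimes_R N \cong M_i^{\oplus \rank N}$ being torsion-free transfers to $M_i$ itself.

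Parts (b) and (c) are reductions to (a). For (b), after establishing that $M_j$ is torsion-free (from $M_i \otimes M_j$ torsion-free, using Lemma~\ref{LT}(c) and a rank argument), $M_j$ is torsionless by~\ref{HWlem}(ii), and applying (a) with $N = M_j$ gives $M_j$ free; by symmetry $M_i$ is also free. The freeness of the test module $M_i$ yields $\pd_R M_i = 0 < \infty$, so $R$ is regular by Corollary~\ref{cor3}(1). For (c), take $N = M_i^*$, which is automatically torsionless; the nonzero torsion-free tensor product $M_i \otimes_R M_i^*$ together with part (a) yields $M_i^*$ free and $M_i$ torsion-free. Dualizing gives $M_i^{**}$ free, and since (a) also gives $M_i$ torsion-free, hence torsionless by~\ref{HWlem}(ii), there is a natural injection $M_i \hookrightarrow M_i^{**}$; the depth-one hypothesis combined with $M_i$ locally free on $\Ass R$ forces this injection to be an isomorphism by a rank argument, so $M_i$ is free and Corollary~\ref{cor3}(1) again gives $R$ regular.

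The main obstacle will be the Tor-vanishing step in part (a): extracting two consecutive Tor vanishings from the weak hypothesis that $M_i \otimes_R N$ is merely torsion-free. The filtration (\ref{olsun}.1) lets one iterate a Dey-Kobayashi move with the $k$-vector space quotients $M_{i-1}/M_i$ playing the role of $N/\fm N$ in~\ref{DT}, but closing this induction cleanly in our general ring-homomorphism setting (with $\fm S \le \Jac(S)$ in place of $R = S$) will be the technically delicate part.
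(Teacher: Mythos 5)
There is a genuine gap in part (a), which is the engine of the whole theorem. Your plan is to extract two consecutive vanishings $\Tor_n^R(M_i,N)=\Tor_{n+1}^R(M_i,N)=0$ and then invoke Theorem~\ref{Mainthm2}(c); you correctly flag this extraction as the delicate point, but no argument is offered, and the hypothesis only concerns the degree-zero piece $M_i\otimes_R N$, so there is no apparent way to bootstrap to higher $\Tor$ of the pair $(M_i,N)$. The paper's proof sidesteps this entirely: since $N$ is torsionless it embeds in a free module, $0\to N\to F\to C\to 0$, and tensoring with $M_i$ gives an injection $\Tor_1^R(M_i,C)\hookrightarrow M_i\otimes_R N$. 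By Lemma~\ref{LT}(c) the module $\Tor_1^R(M_i,C)$ vanishes at every associated prime of $R$, hence is torsion, hence is zero inside the torsion-free module $M_i\otimes_R N$. This is a \emph{single} Tor vanishing, for the auxiliary module $C$ rather than for $N$, and it is converted into $\pd_R(C)\le 1$ not by rigidity but by the faithfulness clause Theorem~\ref{Mainthm2}(b) (hypothesis (ii) makes $M$ faithful, hence $M_{i-1}$ is faithful by Lemma~\ref{LT}(b)). Then $N$, being a first syzygy of $C$, is free. Note also that your fallback --- obtain $\pd_R(N)<\infty$ and then use ``depth considerations'' to force $\pd_R(N)=0$ --- cannot work as stated: the theorem assumes only $\depth(R)\ge 1$, and over a ring of depth at least two a torsionless module of finite projective dimension need not be free. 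The freeness of $N$ in the paper comes precisely from its being a syzygy of a module of projective dimension at most one, not from a depth count on $N$ itself.

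Your reductions of (b) and (c) to (a) do match the paper's (pass to $\overline{M_j}$ via~\ref{HWlem} and apply (a); take $N=M_i^{\ast}$ in (c)), with one further weak point: in (c) the paper concludes that $M_i$ is free from ``$M_i^{\ast}$ free and $M_i$ torsion-free over a ring of depth one'' by citing \cite[2.13]{CGTT}, whereas your claim that the injection $M_i\hookrightarrow M_i^{\ast\ast}$ is forced to be an isomorphism ``by a rank argument'' is not justified: an injection into a free module whose cokernel is torsion need not be surjective, so this step needs the cited lemma (or an actual proof) rather than a rank comparison.
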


\begin{proof} (a) There is an exact sequence of finitely generated $R$-modules $0 \to N \to F \to C \to 0$ where $F$ is free; see~\ref{HWlem}(ii). Tensoring this sequence with $M_i$ over $R$, we obtain an injection $\Tor_1^R(M_i, C) \hookrightarrow M_i \otimes_RN$. By Lemma~\ref{LT}(c), for each associated prime $\fp$ of $R$, we have that $(M_i)_{\fp}$ is free over $R_{\fp}$ and so $\Tor_1^R(M_i, C)_{\fp}=0$. This implies that $\Tor_1^R(M_i, C)$ is a torsion $R$-module and hence it vanishes as $M_i \otimes_RN$ is torsion-free over $R$. Also $M$ is a faithful $R$-module; see, for example, \cite[2.12]{DeyTos}. So Lemma~\ref{LT}(b) shows that $M_{i-1}$ is faithful. As $\Tor_1^R(M_i, C)=0$, Theorem~\ref{Mainthm2}(b) implies that $\pd_R(C)\leq 1$. Now the sequence $0 \to N \to F \to C \to 0$ shows that $N$ is free. Consequently $M_i$ is torsion-free because $0\neq M_i \otimes_RN$ is torsion-free.

(b) Note that $(M_j)_{\fp} \cong (\overline{M_j})_{\fp}$ for each associated prime $\fp$ of $R$. Therefore Lemma~\ref{LT}(c) implies that $(\overline{M_j})_{\fp}$ is free over $R_{\fp}$ for each associated prime $\fp$ of $R$.  Therefore $\overline{M_j}$ is a torsionless $R$-module; see~\ref{HWlem}(ii). It follows that $0\neq M_i \otimes_R M_j \cong M_i\otimes_R \overline{M_j}$; see~\ref{HWlem}(i). Thus $\overline{M_j}\neq 0$, so part (a) implies that $\overline{M_j}$ is free. As $M_i\neq 0$, we conclude from~\ref{HWlem}(i) that $M_j$ is free. Hence $R$ is regular by Corollary~\ref{cor3}(1). By interchanging the roles of $M_i$ and $M_j$, we see that $M_i$ is free too. 

(c) As $0\neq M_i^{\ast}$ is a torsionless $R$-module, it follows from part (a) that $M_i^{\ast}$ is free over $R$ and $M_i$ is torsion-free over $R$. Then $M_i$ is free given that $\depth(R)=1$; see \cite[2.13]{CGTT}. So $R$ is regular by Corollary~\ref{cor3}(1). 
\end{proof}

We finish this section with the following corollary of Theorem \ref{TT}, which establishes Conjecture \ref{HWC} for certain filtration modules. 

\begin{cor} \label{last} Let $f: (R, \fm) \to (S, \fn)$ be a finite local ring homomorphism of local rings, where $R$ is a one-dimensional domain, and let $0\neq M$ be a  finitely generated $S$-module that is torsion-free as an $R$-module. Assume there is a filtration of $M$ of the form
\[
M_t \le M_{t-1} \le \dotsb \le M_1 \le M_0 = M,
\] 
where each $M_i$ is an $S$-module and $\fm M_{i-1} \le M_i \le \fn M_{i-1}$ for all $i = 1, \dotsc, t$. 
If $M_i \otimes_RM^{\ast}_{i}$ is a torsion-free $R$-module, where $1\leq i\leq t$, then $M_i$ is free.\end{cor}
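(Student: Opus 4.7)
The plan is to reduce directly to Theorem~\ref{TT}(c) by verifying its hypotheses one at a time. Because $R$ is a one-dimensional local domain it is Cohen--Macaulay with $\depth(R)=1$ and $\Ass(R)=\{(0)\}$, which supplies the depth hypothesis of part (c) and makes the ``locally free on associated primes'' condition automatic (since $R_{(0)}$ is a field). The ring-theoretic hypotheses of Theorem~\ref{TT} fall out of the setup: $f$ being local gives $\fm S\subseteq\fn=\Jac(S)$, so the filtration condition $\fm M_{i-1}\le M_i\le\fn M_{i-1}$ matches the one in Theorem~\ref{TT}; and $f$ being finite ensures $S$ is Noetherian and $M$ is finitely generated as an $R$-module via $f$.

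Next I would verify the support and nonvanishing conditions. Since $M\neq 0$ is torsion-free over the domain $R$, we have $M_{(0)}\neq 0$, and Nakayama applied to the finitely generated $R$-module $M$ puts $\fm\in\Supp_R(M)$; hence $\Supp_R(M)=\Spec(R)$. Theorem~\ref{TT} also implicitly requires each $M_j$ to be nonzero, so I would pick a nonzerodivisor $x\in\fm$ (available because $\depth(R)=1$) and observe inductively that $M_j\supseteq\fm^{j}M\supseteq x^{j}M\neq 0$ by torsion-freeness of $M$, so every $M_j$ is nonzero and torsion-free.

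To apply Theorem~\ref{TT}(c) I still need $M_i\otimes_R M_i^{\ast}\neq 0$. Since $M_i$ is a nonzero finitely generated torsion-free module over the one-dimensional domain $R$, it embeds into a free $R$-module (pick a basis of $M_i\otimes_R K$ and clear denominators), so $M_i^{\ast}\neq 0$. Both $M_i$ and $M_i^{\ast}$ are then nonzero finitely generated modules over the local ring $(R,\fm)$, and Nakayama gives $M_i\otimes_R M_i^{\ast}\neq 0$. All hypotheses of Theorem~\ref{TT}(c) are now in place, so the theorem yields that $M_i$ is free (and $R$ is regular) as required.

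There is no substantive obstacle here; the work is entirely a matter of ensuring the hypotheses transport correctly through the filtration and that the generic-rank/locally-free-at-associated-primes condition is automatic under our one-dimensional domain assumption. If anything subtle arises, it is only the verification that $M_i^{\ast}\neq 0$ for a torsion-free $R$-module $M_i$ that is not assumed to have constant rank a priori, but over a domain this is a standard clearing-denominators argument.
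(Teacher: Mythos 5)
Your proposal is correct and follows essentially the same route as the paper: both proofs reduce to Theorem~\ref{TT}(c) by checking that each $M_i$ is nonzero and torsion-free (via $\fm^i M\subseteq M_i$ and a nonzerodivisor in $\fm$), that $M_i^{\ast}\neq 0$ and hence $M_i\otimes_R M_i^{\ast}\neq 0$, and that $\Supp_R(M)=\Spec(R)$ because $M$ is a nonzero torsion-free module over a domain. Your version is merely a bit more explicit about the remaining hypotheses (the associated-prime condition being automatic over a domain, and finiteness of $f$ supplying the Noetherian and finite-generation requirements), which the paper leaves implicit.
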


\begin{proof} Note that $\fm^i M\subseteq M_i$. So $M_i\neq 0$ since $M$ is torsion-free over $R$. Hence $M_i$ is torsion-free over $R$. Then $M_i^{\ast}\neq 0$, otherwise $M_i$ would be torsion over $R$ which implies $M_i=0$. Thus $M_i \otimes_RM^{\ast}_{i}\neq 0$. On the other hand, as $M$ is a torsion-free $R$-module and $R$ is a domain, we have that $\Supp_R(M)=\Spec(R)$. Therefore Theorem \ref{TT}(c) implies that $R$ is regular and $M_i$ is free (and $M$ is also free as well). 
\end{proof}

%%%%%%%%%%%%%%

\end{document}